\newlength{\abstand}
\DeclareFontFamily{OT1}{pzc}{}
\DeclareFontShape{OT1}{pzc}{m}{it}{<-> s * [1.150] pzcmi7t}{}
\DeclareMathAlphabet{\mathcal}{OT1}{pzc}{m}{it}
\def\A{{\mathds A}}
\def\F{{\mathds F}}
\def\N{{\mathds N}}
\def\Nn{\N_0}
\def\Np{\N_+}
\def\Nns{\str{\Nn}}
\def\Nps{\str{\Np}}
\def\cA{\mathcal A}
\def\cB{\mathcal B}
\def\cC{\mathcal C}
\def\cD{\mathcal D}
\def\cE{\mathcal E}
\def\cM{\mathcal M}
\def\cU{\mathcal U}
\def\res{\mathrm{res}}
\def\Sets{\mathcal{Sets}}
\def\SSets{\mbox{$\Delta$-$\Sets$}}
\def\SSetsb{\SSets^+}
\def\sSSets{\str{\SSets}}
\def\sSSetsb{\str{\SSetsb}}
\def\SCH{\mathcal{Sch}}
\def\COH{\mathcal{Coh}}
\def\CAT{\mathcal{Cat}}
\def\MCAT{\mathcal{ModCat}}
\def\sMCAT{\str{\MCAT}}
\def\ECAT{\mathcal{ExCat}}
\def\sECAT{\str{\ECAT}}
\def\Sing{\mathcal{Sing}}
\def\Top{\mathcal{Top}}
\newcommand\str[1]{{\mbox{}^*#1}}
\newcommand\MorC[1]{\mathrm{Mor}_{#1}}
\newcommand\Mor[3]{\MorC{#1}(#2,#3)}
\newcommand\id[1]{\mathds{1}_{#1}}
\newcommand\Schfp[1]{\SCH^{\mathrm{fp}}_{#1}}
\newcommand\op[1]{{#1}^{\mathrm{op}}}
\newcommand\Coh[1]{\COH_{#1}}
\newcommand\sCoh[1]{\str{\COH_{#1}}}
\newcommand\s[1]{N\,#1}
\newcommand\sa[1]{\tilde{N}\,#1}
\newcommand\K[2]{\mathrm{K}_{#1}(#2)}
\newcommand\sK[2]{\str{\mathrm{K}}_{#1}(#2)}
\newcommand\Cat[1]{\mbox{${#1}$-$\mathcal{Cat}$}}
\newcommand\UCat[2]{\Cat{#2}^{#1}}
\newcommand\sCat[1]{\mbox{${#1}$-$^*\mathcal{Cat}$}}
\newcommand\Ho[1]{\mathrm{Ho(#1)}}
\newcommand\sHo[1]{\str{\Ho{#1}}}
\newcommand\Qu[1]{\mathrm{Q{#1}}}
\newcommand\sQu[1]{\str{\Qu{#1}}}
\newcommand\Ne[1]{\mathrm{N{#1}}}
\newcommand\sNe[1]{\str{\Ne{#1}}}
\newcommand\USSets[1]{\SSets^{#1}}
\newcommand\Ex[1]{\mathrm{Ex}^\infty{#1}}
\newcommand\ex[1]{\mathrm{ex}^\infty{#1}}
\newcommand\sEx[1]{\str{\Ex{#1}}}
\newcommand\sex[1]{\str{\ex{#1}}}
\newcommand\VB[1]{\mathcal{P}_{#1}}
\newcommand\sVB[1]{\str{\mathcal{P}}_{#1}}
\theoremstyle{definition}
\newtheorem{defi}{Definition}[section]
\newtheorem{lemma}[defi]{Lemma}
\newtheorem{bem}[defi]{Remark}
\newtheorem{satz}[defi]{Proposition}
\newtheorem{thm}[defi]{Theorem}
\newtheorem{cor}[defi]{Corollary}
\title{Nonstandard Model Categories and Homotopy Theory}
\author{Lars Brünjes, Christian Serpé}
\date{\today}
\email{lbrunjes@gmx.de}
\address{
  Christian Serpé \\
  Westfälische Wilhelms-Universität Münster \\
  Mathematisches Institut \\
  Sonderforschungsbereich 478 ``Geometrische Strukturen in der Mathematik'' \\
  Hittorfstr. 27 \\
  D-48149 Münster \\
  Germany}
\email{serpe@uni-muenster.de}
\subjclass[2000]{18A05,03H05,18G55,19E08}
\date{\today}
\begin{document}

\begin{abstract}
In order to apply nonstandard methods to questions of algebraic
geometry we continue our investigation from \cite{enlcat} and show how
important homotopical constructions behave under enlargements.
\end{abstract}

\maketitle



\section{Introduction}
Modern algebraic geometry makes heavy use of categorical
constructions. Therefore, in order to apply nonstandard methods to algebraic
geometry, in \cite{enlcat} we started to study how
such categorical constructions behave under enlargements. In the papers
\cite{nsetale}, \cite{enlsch} and \cite{EoSII} we applied the results to constructions in algebraic geometry.
In this paper we want to continue the investigation of
\cite{enlcat}. For reasons of why we consider the use of nonstandard
methods in algebraic geometry worthwhile, we refer the reader to the introduction of
\cite{enlcat}.

\vspace{\abstand}

Homotopical methods have become more and more important in algebraic
geometry. For instance, the higher $K$-groups of schemes are defined
as homotopy groups of certain simplicial sets, and the construction of Voevodsky's
$\A^1$-homotopy category makes heavy use of the concept
of Quillen model categories. Here we therefore want to investigate the
behaviour of these concept under enlargements.

\vspace{\abstand}

We start in section 2 with general (strict) n-categories and their
enlargements. Not surprisingly, it turns out that this notion behaves well
under enlargements.

After recalling some definitions, we study the enlargements of
model categories in section 3, and we show that the external and internal homotopy category
of an internal model category coincide.

In section 4 we consider simplicial sets. In general, the enlargement
of a topological space will not again be a topological space.
But we will see that at least a *simplicial set can be restricted to
become a usual simplicial set. We show that there is a morphism from the external
to the internal homotopy group of an internal simplicial set.

In section 5 we study the K-theory of exact categories and their
enlargements. Then, now formulated in terms of ultraproducts,
we construct a nontrivial morphism from the K-theory of an ultraproduct
to the ultraproduct of the K-theory.

\section{Internal $n$-Categories}

\bigskip

\noindent
We work in ZFC with the additional assumption that any set
is element of a universe (compare \cite{SGA4I}[Exp. I.0]). \\

Recall that for a natural number $n\in\Nn$,
a \emph{(small, strict) $n$-category} $\cA$
is given by the following data (compare \cite{leinster}):\\
\begin{itemize}
  \item
    A diagram of sets
    \[
      \xymatrix{
        {A_n} \ar@<1mm>[r]^s \ar@<-1mm>[r]_t &
        {A_{n-1}} \ar@<1mm>[r]^s \ar@<-1mm>[r]_t &
        {\ldots} \ar@<1mm>[r]^s \ar@<-1mm>[r]_t &
        {A_1} \ar@<1mm>[r]^s \ar@<-1mm>[r]_t &
        {A_0},
      }
    \]
    where $A_0$ is called the set of \emph{objects},
    $A_1$ the set of \emph{morphisms}
    and $A_m$ the set of \emph{$m$-cells}
    (for $0\leq m\leq n$),
    $s$ is called \emph{source} and $t$ is called \emph{target},\\
  \item
    maps $\id{}:A_m\rightarrow A_{m+1}$,
    $\alpha\mapsto \id{\alpha}$ for $m=0,\ldots,n-1$,
    called \emph{identity}, and \\
  \item
    maps $\circ_p:A_m\times_{A_p}A_m\longrightarrow A_m$,
    $(\alpha',\alpha)\mapsto\alpha'\circ_p\alpha$
    for all $0\leq p<m\leq n$
    (where the fibred product is taken with respect to
    $s^{m-p}$ and $t^{m-p}$),
    called \emph{composition},\\
\end{itemize}
subject to the following conditions:
\begin{itemize}
  \item
    $ss(\alpha)=st(\alpha)=ts(\alpha)=tt(\alpha)$ for $\alpha\in A_m$,
    $2\leq m\leq n$,\\
  \item
    \[
      \;\;\;\;\;\;\;\;\;\;
      s(\alpha'\circ_p\alpha)=
      \left\{\begin{array}{ll}
        s(\alpha) & ,m=p+1 \\
        s(\alpha')\circ_ps(\alpha) & ,m\geq p+2
      \end{array}\right.
      \;\;\mbox{and}\;\;\;\;
      t(\alpha'\circ_p\alpha)=
      \left\{\begin{array}{ll}
        t(\alpha) & ,m=p+1 \\
        t(\alpha')\circ_pt(\alpha) & ,m\geq p+2
      \end{array}\right.
    \]
    for $0\leq p<m\leq n$ and $\alpha,\alpha'\in A_m$,\\
  \item
    $s\id{\alpha}=\alpha=t\id{\alpha}$ for $\alpha\in A_m$, $0\leq m<n$,\\
  \item
    $\id{}^{m-p}t^{m-p}(\alpha)\circ_p\alpha=\alpha=\alpha\circ_p\id{}^{m-p}s^{m-p}(\alpha)$
    for $\alpha\in A_m$ and $0\leq p<m\leq n$,\\
  \item
    $(\alpha''\circ_p\alpha')\circ_p\alpha=\alpha''\circ_p(\alpha'\circ_p\alpha)$
    for $\alpha''$, $\alpha'$ and $\alpha$ in $A_m$ with
    $(\alpha'',\alpha')$ and $(\alpha',\alpha)$ in $A_m\times_{A_p}A_m$
    and $0\leq p<m\leq n$,\\
  \item
    $\id{\alpha'}\circ_p\id{\alpha}=\id{\alpha'\circ_p\alpha}$
    for $(\alpha',\alpha)\in A_m\times_{A_p}A_m$
    and $0\leq p<m<n$ and \\
  \item
    $(\beta'\circ_p\beta)\circ_q(\alpha'\circ_p\alpha)=(\beta'\circ_q\alpha')\circ_p(\beta\circ_q\alpha)$
    for $(\beta',\beta)$ and $(\alpha',\alpha)$ in $A_m\times_{A_p}A_m$
    with $(\beta',\alpha')$ and $(\beta,\alpha)$ in $A_m\times_{A_q}A_m$
    and $0\leq q<p<m\leq n$.\\
\end{itemize}

\noindent
Note that a 0-category is just a set
and that a 1-category is an ordinary category.\\

For $n\in\Nn$ and $n$-categories $\cC=\langle A_m,s,t,\id{},\circ_p\rangle$
and $\cD=\langle B_m,s,t,\id{},\circ_p\rangle$,
a \emph{(covariant) $n$-functor $F:\cC\longrightarrow\cD$}
is given by maps $A_m\longrightarrow B_m$, $\alpha\mapsto F\alpha$
for $0\leq m\leq n$ that commute with $s$, $t$, $\id{}$ and $\circ_p$ in the
obvious way.\\

It is easy to see that we get a category $\Cat{n}$ in this way
whose objects are all small $n$-categories and whose morphisms are
$n$-functors.

Note that $\Cat{0}=\Sets$ is just the category of sets,
and $\Cat{1}=\CAT$ is the category of categories.

\vspace{\abstand}

\begin{defi}\label{defucat}
  Let $\cU$ be a set, and let $n\in\Nn$ be a natural number.
  \begin{enumerate}
    \item\label{ucat}
        A \emph{$\cU$-small $n$-category} is an $n$-category
        $\langle A_m,s,t,\id{},\circ_p\rangle$
        with $A_m\in\cU$ for all $0\leq m\leq n$.
    \item\label{ncatu}
        $\UCat{\cU}{n}$ is the full subcategory of $\Cat{n}$
        with objects the $\cU$-small $n$-categories.\\
  \end{enumerate}
\end{defi}

Note that for $\cU$ a \emph{universe} and $n$ as above,
$\UCat{\cU}{n}$ is \emph{not} $\cU$-small. For example,
$\UCat{\cU}{0}$ is the category of $\cU$-sets, so the set of its objects
is $\cU\not\in\cU$.
On the other hand, if we choose a hierarchy $\cU_0\in\cU_1\in\cU_2\in\ldots$
of universes,
then $\UCat{\cU_n}{n}$ is $\cU_{n+1}$-small for all $n\in\Nn$.

Let $S$ be a set of ``individuals'' with cardinality
$|S|\geq|\bigcup_{n\in\Nn}\cU_n|$,
and let $*:\hat{S}\longrightarrow\widehat{\str{S}}$ be an enlargement.

Our discussion shows that the categories
$\UCat{\cU_0}{0}$, $\UCat{\cU_1}{1}$, $\UCat{\cU_2}{2}$, \ldots
are all $\hat{S}$-small (in the sense of \cite{enlcat} if we consider
the universes $\cU_i$ as subsets of $S$ and hence as elements of $S_1\setminus S$),
so we can consider their enlargements $\str{[\UCat{\cU_n}{n}]}$ in $\widehat{\str{S}}$
which are $\widehat{\str{S}}$-small categories.\\

\begin{defi}\label{defintcat}
  For $n\in\Nn$,
  an \emph{$n$-*category} is an object
  of $\sCat{n}:=\str{[\UCat{\cU_n}{n}]}$,
  and a (covariant) \emph{$n$-*functor}
  $F:\cC\longrightarrow\cD$
  between $n$-*categories $\cC$ and $\cD$
  is a morphism $\cC\longrightarrow\cD$ in $\sCat{n}$.\\
\end{defi}

\begin{satz}\label{satzintcat}
  $\sCat{n}$ is the category whose objects
  are $\str{\cU_n}$-small \emph{internal} $n$-categories
  and whose morphisms are \emph{internal} $n$-functors.
\end{satz}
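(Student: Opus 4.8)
The plan is to unwind the definition of $\sCat{n} = \str{[\UCat{\cU_n}{n}]}$ using the general machinery for enlargements of categories from \cite{enlcat}. The key observation is that an $n$-category is, by the explicit list of data and axioms given above, a \emph{first-order structure} in a suitable many-sorted language: it consists of finitely many sets $A_0,\ldots,A_n$, the maps $s,t,\id{},\circ_p$ (the latter being partial, defined on the fibred products), and the axioms are all first-order formulas over these sorts (equalities of composites, the interchange law, etc.). Since $\UCat{\cU_n}{n}$ is realized as a subcategory of the category of such structures, taking its enlargement should, by transfer, produce exactly the category of the corresponding \emph{internal} structures — i.e. $\str{\cU_n}$-small objects that are $n$-categories ``from the point of view of $\widehat{\str{S}}$'' — with internal $n$-functors as morphisms.

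Concretely, I would proceed in the following steps. First, recall from \cite{enlcat} the description of the enlargement of an $\hat{S}$-small category $\cC$: its objects are the elements of $\str{\Ob{\cC}}$ and its morphisms are the elements of $\str{(\coprod\MorC{\cC})}$, with source, target, identity and composition obtained by applying $*$ to the corresponding structure maps of $\cC$. Second, apply this to $\cC = \UCat{\cU_n}{n}$: an object of $\sCat{n}$ is an element of $\str{\Ob{\UCat{\cU_n}{n}}}$. Third, write out what membership in $\str{\Ob{\UCat{\cU_n}{n}}}$ means by transfer: the statement ``$X$ is a tuple $\langle A_m, s, t, \id{}, \circ_p\rangle$ with each $A_m \in \cU_n$ satisfying the $n$-category axioms'' is first-order in the bounded language, so its $*$-transform says precisely that $X = \langle A_m, s, t, \id{}, \circ_p\rangle$ with each $A_m \in \str{\cU_n}$ and the $*$-transformed axioms holding — that is, $X$ is an $\str{\cU_n}$-small internal $n$-category. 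Fourth, do the analogous transfer for morphisms: an arrow of $\sCat{n}$ is an element of $\str{(\text{disjoint union of all }\Mor{\UCat{\cU_n}{n}}{\cC}{\cD})}$, and transfer of the first-order condition ``$F$ is a family of maps $A_m \to B_m$ commuting with $s,t,\id{},\circ_p$'' identifies this with the set of internal $n$-functors between internal $n$-categories. Finally, check that the composition and identities in $\sCat{n}$ (obtained by $*$-transforming those of $\UCat{\cU_n}{n}$) agree with the internal composition and identities of $n$-functors, which is again a direct transfer statement.

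The main obstacle I expect is \emph{bookkeeping about the language and the boundedness of the relevant formulas}, rather than any conceptual difficulty: one must be careful that the quantifiers appearing in the $n$-category axioms (and in the definition of $n$-functor) are all bounded by sets that are themselves in the superstructure $\hat{S}$ — in particular that ``each $A_m \in \cU_n$'' makes the whole structure an element of a fixed $\hat{S}$-set, so that transfer applies verbatim. Since $n$ is a fixed standard natural number, the list of data and axioms is genuinely finite, so there is no issue with ``infinitely long'' formulas; the statement for internal objects is just the $*$-transform of a single first-order sentence. A secondary point to verify carefully is that the partial composition maps $\circ_p$, defined on fibred products $A_m \times_{A_p} A_m$, transfer correctly — i.e. that $\str{(A_m \times_{A_p} A_m)}$ is the internal fibred product — but this is exactly the kind of statement established in \cite{enlcat} about enlargements commuting with finite limits of sets. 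Once these points are in place, the identification of $\sCat{n}$ with the category of $\str{\cU_n}$-small internal $n$-categories and internal $n$-functors is immediate.
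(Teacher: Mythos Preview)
Your proposal is correct and follows exactly the same approach as the paper: the paper's proof simply observes that the objects of $\UCat{\cU_n}{n}$ are tuples satisfying first-order conditions, so transfer gives the object part, and the morphism part is analogous. Your write-up is more detailed (making explicit the boundedness issues and the behaviour of the fibred products under $*$), but the underlying idea is identical.
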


\begin{proof}
  The objects of $\UCat{\cU_n}{n}$ are all tuples
  $\langle A_m,s,t,\id{},\circ_p\rangle$ with $A_m\in\cU_n$
  that satisfy the conditions stated above.
  Since these conditions are obviously first order,
  transfer proves the object part of the proposition.
  The morphism part is analogous but even simpler.\\
\end{proof}

In view of \ref{satzintcat},
we will from now on use the terms ``internal $n$-categories''
and ``internal $n$-functors'' for $n$-*categories and $n$-*functors
respectively (and drop ``$\str{\cU_n}$-small'' from the notation).\\

Note that \ref{satzintcat} shows in particular
that if $\cC$ is a $\cU_n$-small $n$-category,
then $\str{\cC}$ is also an $n$-category.
Also note that --- as $\UCat{\cU_n}{n}$ clearly has infinitely many
objects --- not all internal $n$-categories are of the form $\str{\cC}$
for an $n$-category $\cC$.

For example,
since $\UCat{\cU_1}{1}$ obviously contains the categories of
finite dimensional vector spaces over $\F_p$ for all finite primes $p$,
$\sCat{1}$ contains the internal categories of *finite dimensional
$\F_P$-vector spaces
for all \emph{infinite} primes $P\in\Nns$,
and these categories are clearly
not of the form $\str{\cC}$ for any
category $\cC$.\\

\section{Internal Model Categories}

\bigskip

\noindent
Recall the notion of \emph{model category} from \cite{quillen}: \\

A \emph{(small) model category} is a quadruple $\langle\cM,W,F,C\rangle$,
with a small category $\cM=\langle M_0,M_1,s,t,\id{},\circ\rangle$
and sets of morphisms $W,F,C\subseteq M_1$,
subject to the following conditions:\\
\begin{itemize}
  \item
    All \emph{finite} limits and colimits exist in $\cM$,\\
  \item
    $\forall X\in M_0:\ \id{X}\in W\cap F\cap C$,\\
  \item
    $\forall X\in\{W,F,C\}:\ \forall(f,g)\in X\times_{M_0}X:\ fg\in X$,\\
  \item
    $\forall(f,g)\in M_1\times_{M_0}M_1:\
    \bigl[fg\in W\wedge(f\in W\vee g\in W)\bigr]\Longrightarrow\{f,g\}\subseteq W$,\\
  \item
     $\forall X\in\{W,F,C\}:\
      \forall(r,i),(r',i')\in M_1\times_{M_0}M_1:\
      \forall f\in M_1:\
      \forall g\in X:$
     \begin{flushright}
       $\bigl[si=sf=tr\wedge si'=tf=tr'\wedge sg=ti\wedge tg=ti'
          \wedge ri=\id{si}\wedge r'i'=\id{si'}\bigr]
        \Longrightarrow
        f\in X$,
     \end{flushright}
  \item
    $\forall(p,f)\in F\times_{M_0}M_1:\
     \forall(g,i)\in M_1\times_{M_0}C:\
     \bigl[
       tp=tg\wedge
       si=sf\wedge
       (i\in W\vee p\in W)
     \bigr]
     \Longrightarrow$
     \begin{flushright}
       $\exists h\in M_1:\
        sh=ti\wedge
        th=sp\wedge
        hi=f\wedge
        ph=g$,
     \end{flushright}
  \item
    $\forall f\in M_1:\
     \bigl[
       \exists(p,i)\in(F\cap W)\times_{M_0}C:\ f=pi
     \bigr]
     \wedge
     \bigl[
       \exists(p,i)\in F\times_{M_0}(C\cap W):\ f=pi
     \bigr]$.\\
\end{itemize}

If $\cM$ is a category, then a triple $\langle W,F,C\rangle$
of classes of morphisms of $\cM$ is called a \emph{model structure on $\cM$}
if $\langle\cM,W,F,C\rangle$ is a model category.\\

By definition, a \emph{Quillen functor} between model categories
$\langle\cM,W,F,C\rangle$ and $\langle\cM',W',F',C'\rangle$
is an adjunction $\langle L,R,\varphi\rangle$ between $\cM$ and $\cM'$
satisfying\\

\begin{itemize}
  \item
    $\forall c\in C:\ Lc\in C'$ and\\
  \item
    $\forall f\in F':\ Rf\in F$.\\
\end{itemize}

A \emph{natural transformation} between Quillen functors
$\langle L,R,\varphi\rangle,\langle L',R',\varphi'\rangle:\cM\longrightarrow\cM'$
is just a natural transformation from $L$ to $L'$.\\

We chose to put the definition in this --- admittedly not very readable ---
way in order to show that everything
(except maybe the first condition)
is first order.
That also this first condition on
the existence of finite limits and colimits is first order
was explained in detail in \cite{enlcat}.\\

\begin{defi}\label{defintmodcat}
  Call a model category $\langle\cM,W,F,C\rangle$ \emph{$\cU_1$-small}
  if $\cM$, the underlying category, is $\cU_1$-small,
  and let $\MCAT^{\cU_1}$ be the 2-category with $\cU_1$-small model categories
  as objects,
  Quillen functors as morphisms
  and natural transformations between Quillen functors as 2-cells.

  $\MCAT^{\cU_1}$ is obviously $\cU_2$-small and hence an object
  of $\UCat{\cU_2}{2}$.
  Put $\sMCAT:=\str{[\MCAT^{\cU_1}]}$.
  This is an object of $\sCat{2}$ and hence by definition an internal
  2-category ---
  call its objects \emph{$^*$model categories},
  its morphisms \emph{*Quillen functors}
  and its 2-cells \emph{*natural transformations}.\\
\end{defi}

\begin{satz}\label{satzintmodcat}
  $\sMCAT$ is the 2-category having
  $\str{\cU_2}$-small \emph{internal} model categories as objects,
  \emph{internal} Quillen functors as morphisms
  and \emph{internal} natural transformations as 2-cells.
\end{satz}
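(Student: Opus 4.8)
The plan is to mimic exactly the proof of Proposition \ref{satzintcat}, since the situation is entirely parallel. By Definition \ref{defintmodcat}, $\sMCAT=\str{[\MCAT^{\cU_1}]}$ is the enlargement of the $\cU_2$-small $2$-category $\MCAT^{\cU_1}$, and hence by Proposition \ref{satzintcat} it is an $\str{\cU_2}$-small internal $2$-category. So the only content is to identify its objects, morphisms and $2$-cells as \emph{internal model categories}, \emph{internal Quillen functors} and \emph{internal natural transformations} respectively. The tool, as before, is the transfer principle applied to the defining first-order conditions.

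First I would recall, as the excerpt already stresses, that an object of $\MCAT^{\cU_1}$ is a quadruple $\langle\cM,W,F,C\rangle$ where $\cM=\langle M_0,M_1,s,t,\id{},\circ\rangle$ is a $\cU_1$-small category and $W,F,C\subseteq M_1$, subject to the bulleted list of axioms; every one of those conditions is first order in the data $(M_0,M_1,s,t,\id{},\circ,W,F,C)$ --- this is why the definition was written out in that deliberately unreadable quantifier-heavy form --- with the one exception of the existence of all finite limits and colimits in $\cM$, and for that condition the excerpt invokes \cite{enlcat}, where it is shown that ``$\cM$ has all finite limits and colimits'' is itself expressible by a first-order formula. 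Hence the property ``$x$ is a $\cU_1$-small model category'' is first order, so transfer tells us that the $\str{}$-image of the set of objects of $\MCAT^{\cU_1}$ is precisely the set of tuples $\langle\cM,W,F,C\rangle$ in $\widehat{\str{S}}$ satisfying the $\str{}$-transferred axioms, i.e.\ the $\str{\cU_2}$-small internal model categories (here using that, just as in Proposition \ref{satzintcat}, being $\str{\cU_1}$-small internally for $\cM$ is the transfer of being $\cU_1$-small, and an internal model category has an internal underlying category by \ref{satzintcat} applied with $n=1$).

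Next, the morphism part. A morphism in $\MCAT^{\cU_1}$ is a Quillen functor $\langle L,R,\varphi\rangle$, i.e.\ an adjunction between the underlying categories together with the two bulleted conditions $Lc\in C'$ for $c\in C$ and $Rf\in F$ for $f\in F'$; the notion of adjunction between $\cU_1$-small categories and the two extra conditions are again manifestly first order, so transfer identifies $\Mor{\sMCAT}{\cM}{\cM'}$ with the internal Quillen functors between the internal model categories $\cM$ and $\cM'$. Likewise a $2$-cell is just a natural transformation $L\Rightarrow L'$ of the left adjoints, a first-order notion, so transfer gives internal natural transformations. Finally one checks that the structure maps of the $2$-category $\sMCAT$ --- source, target, identities, the two compositions $\circ_0,\circ_1$ --- are the $\str{}$-images of those of $\MCAT^{\cU_1}$, hence coincide with the internally-defined composition of internal Quillen functors and internal natural transformations; this is again a transfer statement about the (first-order) equations defining a $2$-category, and is the ``analogous but even simpler'' part.

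I do not expect any real obstacle: the proof is a routine transfer argument, and the genuine work --- showing that ``all finite limits and colimits exist'' is first order --- was already done in \cite{enlcat} and is quoted in the excerpt. The only point requiring a little care, and thus the closest thing to a main step, is bookkeeping: making sure that the transferred axioms reassemble exactly into the \emph{internal} versions of ``model category'', ``Quillen functor'' and ``natural transformation'' (in particular that one correctly reads off $\cM$ as an internal category, $W,F,C$ as internal subclasses of $\str{M_1}$, and $\varphi$ as an internal natural bijection), but this is bundled into the statement by fiat and no separate argument is needed. So the proof will be short, of essentially the same length and shape as that of Proposition \ref{satzintcat}.
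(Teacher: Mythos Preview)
Your proposal is correct and follows essentially the same approach as the paper: transfer of the first-order axioms for objects, and an appeal to \cite{enlcat} for the fact that adjunctions and natural transformations are first order to handle the morphisms and $2$-cells. The paper's proof is simply the terse version of what you wrote.
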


\begin{proof}
  By transfer,
  an object of $\sMCAT$ is a quadruple
  $\langle\cM,W,F,C\rangle$ with $\cM$ a $\str{\cU_1}$-small category,
  subject to the transferred version of the conditions defining
  a model category.
  Since these conditions are first order,
  the object part of the proposition follows.
  The rest follows from \cite{enlcat}, where we show that adjunctions
  and natural transformations are first order.\\
\end{proof}

\begin{bem}\label{beminttwocat}
  Examples of 2-categories whose structure is defined by first order conditions
  abound and can be treated in a similar way:
  In \cite{enlcat} we have seen (although in a slightly different setup)
  that the 2-categories of additive, abelian, triangulated categories and posets
  as well as those of (additive, abelian, triangulated) fibrations
  all belong to that category.

  Another example is the 2-category of \emph{exact categories}
  that we will need for our treatment of algebraic $K$-theory below.

  In all these cases, the analogue of \ref{satzintmodcat} holds:
  *Additive categories are precisely the (small) internal additive categories,
  *abelian categories are precisely the (small) internal abelian categories,
  and so on.\\
\end{bem}

Because of \ref{satzintmodcat}, we will
refer to *model categories, *Quillen functors and *natural transformations
as internal model categories
(again dropping ``$\str{\cU_1}$-small'' from the notation),
internal Quillen functors and internal natural transformations
respectively.\\

Before we can formulate the next result, we have to recall some more definitions and facts from the theory of model categories.
Let $\langle\cM,W,F,C\rangle$ be a model category.\\

\begin{itemize}
  \item
    Since $\cM$ has all finite limits and colimits, it in particular has a terminal and an initial object,
    which we denote by $*$ and $\emptyset$ respectively.\\
  \item
    An object $X$ of $\cM$ is called \emph{cofibrant}
    if the unique morphism $\emptyset\longrightarrow X$
    is in $C$,
    and \emph{fibrant}
    if the unique morphism $X\longrightarrow *$
    is in $F$.\\
  \item
   For all objects $X$ in $\cM$,
   there is a morphism $QX\longrightarrow X$ in $F\cap W$
   with $QX$ cofibrant and a morphism $X\longrightarrow RX$ in $C\cap W$
   with $RX$ fibrant. (It follows from the axioms that $RQX$ is both
   fibrant and cofibrant.)\\
  \item
    Let $A$ and $X$ be cofibrant respectively fibrant objects of $\cM$,
    and let $f,g\in\Mor{\cM}{A}{X}$ be morphisms.
    Then $f$ and $g$ are called \emph{homotopic}, $f\sim g$,
    if there exists a commutative diagram
    \[
      \xymatrix@C=20mm@R=15mm{
        A \ar[dr]_{i_0} \ar[d]_{\id{A}} \ar[drr]^f \\
        A &
        {I} \ar[r]^H \ar[l]_p &
        X \\
        A \ar[ur]^{i_1} \ar[u]^{\id{A}} \ar[urr]_g \\
      }
    \]
    in $\cM$ with $p\in W$. Homotopy is an equivalence relation,
    and the set of equivalence (or homotopy) classes
    $\Mor{\cM}{A}{X}/\sim$
    is denoted by $\pi(A,X)$.\\
\end{itemize}

By transfer, we get corresponding notions of *(co-)fibrant objects
and *homotopic morphisms for internal model categories.\\

\begin{satz}\label{satzhomotopic}
  Let $\cM=\langle\cM,W,F,C\rangle$ be an internal model category,
  let $A$ and $X$ be objects of $\cM$
  with $A$ *cofibrant and $X$ *fibrant, and let $f,g:A\longrightarrow X$
  be morphisms in $\cM$.

  Then $A$ is cofibrant, and $X$ is fibrant,
  and $f$ and $g$ are *homotopic
  if and only if they are homotopic.
  In particular, the set of *homotopy classes equals $\pi(A,X)$,
  the set of homotopy classes.
\end{satz}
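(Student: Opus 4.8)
The plan is to show that each of the three claims follows from transfer applied to statements that are first order over the model-category structure, together with a careful analysis of what ``finite'' limits and colimits contribute.

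First I would handle the assertions that $A$ is cofibrant and $X$ is fibrant. Being $^*$cofibrant means that the unique morphism $\emptyset\to A$ lies in $C$, where $\emptyset$ is the initial object of the internal category $\cM$. The point here is that the initial object of $\cM$ and the initial object of the underlying (ordinary) category of $\cM$ coincide: ``$I$ is initial'' is the first order condition ``$\forall Y\in M_0\ \exists! u\in M_1:\ su=I\wedge tu=Y$'', so by transfer an internal initial object is a genuine initial object, and conversely. Hence $\emptyset\to A$ is literally the ordinary unique morphism from the initial object, and $\emptyset\to A\in C$ says exactly that $A$ is cofibrant. The argument for $X$ fibrant, using the terminal object $*$, is dual. (I would remark that this is the place where the results of \cite{enlcat} on finite limits and colimits being first order are invoked, so that terminal and initial objects genuinely transfer.)

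Next I would address the homotopy equivalence. The relation ``$f\sim g$'' is, unwinding the diagram in the excerpt, the first order statement
\[
  \exists I\in M_0\ \exists H,p,i_0,i_1\in M_1:\
  \bigl[p\in W\wedge\text{(the stated source/target equations)}\wedge pi_0=pi_1=\id{A}\wedge Hi_0=f\wedge Hi_1=g\bigr],
\]
quantifying only over objects and morphisms of $\cM$ and using membership in $W$, which are all part of the internal structure. Therefore ``$f$ is $^*$homotopic to $g$'' is the transfer of ``$f\sim g$'' with all data taken internally, i.e.\ it is precisely the internal reading of this same first order formula. Since $A$ is cofibrant and $X$ is fibrant in the ordinary sense (by the previous paragraph) as well as $^*$cofibrant and $^*$fibrant, the formula makes sense on both sides, and transfer gives the biconditional ``$f$ and $g$ are $^*$homotopic iff $f\sim g$''. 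Finally, since $^*$homotopy classes of $A\to X$ are by definition the fibers of the transferred quotient map, and the equivalence relation agrees with $\sim$ by what was just shown, the set of $^*$homotopy classes equals $\Mor{\cM}{A}{X}/\sim=\pi(A,X)$.

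The main obstacle — and really the only subtle point — is making sure that the objects $\emptyset$, $*$, and the cylinder object $I$ that appear in these formulas are handled correctly: a priori ``$\emptyset\to A\in C$'' in the internal sense refers to the internal initial object, and one must check this is the same morphism as the one appearing in the definition of ``cofibrant'' for the underlying category. This is exactly why I would isolate the initial/terminal object discussion first and cite \cite{enlcat}; once that is in place, everything else is a mechanical application of transfer to explicitly first order formulas, and I would not belabor the routine verification that the displayed commutative diagram translates into the quantifier string above.
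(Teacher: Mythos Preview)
Your proof is correct and follows exactly the approach of the paper: the paper's proof simply notes that the $^*$terminal and $^*$initial objects agree with the terminal and initial objects, and that the condition defining homotopy is first order. You have spelled out these two observations in more detail (writing the first order formula for ``initial'' and for the cylinder diagram explicitly), but the argument is the same.
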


\begin{proof}
  This is easy, because
  the *terminal object of $\cM$ is the terminal object,
  the *initial object is the initial object,
  and the condition defining homotopy is obviously first order.\\
\end{proof}

By definition, the \emph{homotopy category} $\Ho{\cM}$
of a model category $\langle\cM,W,F,C\rangle$
is the localization $\cM[W^{-1}]$ of $\cM$ by the set $W$.
By transfer, we get the notion of the \emph{*homotopy category}
$\sHo{\cM}$ of an internal category $\langle\cM,F,C,W\rangle$,
which is the *localization of $\cM$ by $W$.\\

Note that for an arbitrary internal category $\cC$,
it will not be true that its *localization
by an (internal) set $W$ of morphisms
agrees with its localization by $W$, because in general
the morphisms in the localization are \emph{finite} words
built from morphisms in $\cC$ and formal inverses of morphisms in $W$,
whereas words in the *localization can be of *finite length and
do not have to be finite.

Nevertheless, the situation is much better with model categories,
as the next result shows:\\

\begin{thm}\label{thmmodel}
  Let $\langle\cM,F,C,W\rangle$ be an internal model category.
  Then $\Ho{\cM}$ and $\sHo{\cM}$ are canonically isomorphic.
\end{thm}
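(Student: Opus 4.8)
The plan is to exploit the classical description of the homotopy category of a model category in terms of the full subcategory of fibrant-cofibrant objects and the homotopy relation — a description that, crucially, involves no infinite words — and to show that this description transfers. Recall that for an ordinary model category $\langle\cM,W,F,C\rangle$ one has a category $\cM_{cf}$ whose objects are the fibrant-cofibrant objects of $\cM$ and whose morphisms are homotopy classes (so $\Mor{\cM_{cf}}{A}{X}=\pi(A,X)$), together with a functor $\gamma:\cM\to\cM_{cf}$ sending $X$ to $RQX$ and a morphism $f$ to (the homotopy class of) a lift; and the fundamental theorem of model categories says that $\gamma$ exhibits $\cM_{cf}$ as the localization $\cM[W^{-1}]=\Ho{\cM}$. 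The key point is that every statement in this package — ``$\cM_{cf}$ is a category with these hom-sets'', ``$\gamma$ is a functor'', ``$\gamma$ inverts $W$'', ``$\gamma$ is initial among functors inverting $W$'' (i.e. the universal property of the localization restricted to this concrete model) — is a first-order assertion about the quadruple $\langle\cM,W,F,C\rangle$, once one has, as recalled in the excerpt and in \cite{enlcat}, that finite limits/colimits, adjunctions and the homotopy relation are first order.

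First I would make precise the first-order statement $\Phi$ to be transferred: it asserts the existence of a category $\cN$, a functor $\gamma:\cM\to\cN$, and the conjunction of (a) every object of $\cN$ is a fibrant-cofibrant object of $\cM$ and every fibrant-cofibrant object occurs, (b) $\Mor{\cN}{A}{X}$ is the quotient of $\Mor{\cM}{A}{X}$ by the homotopy relation, (c) $\gamma(w)$ is an isomorphism for $w\in W$, and (d) for every category $\cD$ and every functor $G:\cM\to\cD$ inverting $W$ there is a unique functor $\bar G:\cN\to\cD$ with $\bar G\circ\gamma=G$. Clause (d) as written quantifies over the proper class of all categories $\cD$, which is not literally first order; the standard fix is to phrase the universal property via the canonical presentation of $\cM[W^{-1}]$ — morphisms are equivalence classes of finite zig-zags — and to assert instead: the induced comparison functor $\cM[W^{-1}]\to\cN$ is an isomorphism of categories. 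The subtlety flagged in the excerpt is exactly that $\str{}$ of the localization $\cM[W^{-1}]$ is the \emph{*-localization} $\sHo{\cM}$ (words of *-finite length), not $\cM[W^{-1}]$ itself. But in $\Phi$ the localization appears only through the statement ``the comparison functor is an isomorphism'', and this whole statement \emph{is} first order: it speaks of the two internal categories $\cM[W^{-1}]$ and $\cN$ and a functor between them being bijective on objects and on each hom-set.

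The proof then runs: by the fundamental theorem of model categories (a theorem of ordinary mathematics), $\Phi$ holds for every $\cU_1$-small model category; hence by transfer $\str{\Phi}$ holds for every internal model category $\langle\cM,F,C,W\rangle$. Unwinding $\str{\Phi}$: there is an internal category $\cN$ (internally the fibrant-cofibrant objects with *-homotopy classes as morphisms, which by Proposition~\ref{satzhomotopic} coincide with ordinary homotopy classes on the relevant hom-sets) and an internal functor $\gamma:\cM\to\cN$ such that the comparison functor $\sHo{\cM}=\str{(\cM[W^{-1}])}\to\cN$ is an isomorphism. On the other hand, the ordinary localization $\Ho{\cM}=\cM[W^{-1}]$ also receives a functor from $\cM$ inverting $W$, namely $\gamma$ again (it inverts $W$ by $\str{\Phi}$(c)), so by the ordinary universal property of $\cM[W^{-1}]$ there is a comparison functor $\Ho{\cM}\to\cN$; one checks it is an isomorphism because $\cM_{cf}$ (which is $\cN$ on objects and hom-sets, by Proposition~\ref{satzhomotopic}) is built from data — fibrant-cofibrant objects and homotopy classes of morphisms — that do not see the difference between the internal and external viewpoint, and the classical proof that this comparison is an isomorphism uses only \emph{finite} zig-zags. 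Composing, $\Ho{\cM}\xrightarrow{\sim}\cN\xleftarrow{\sim}\sHo{\cM}$ gives the desired canonical isomorphism, and naturality/canonicity follows since all the functors in sight commute with the structure functors from $\cM$.

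The main obstacle is the one identified in the paragraph before the theorem: the localization construction does \emph{not} commute with $\str{}$ in general, so one cannot simply say ``$\sHo{\cM}=\str{\Ho{\cM}}$''. The work is entirely in replacing the ``bad'' presentation of $\Ho{\cM}$ (infinite-word-sensitive) by the ``good'' one ($\cM_{cf}$, finite-word), checking carefully that the fundamental theorem of model categories can be stated as a single first-order sentence $\Phi$ about $\langle\cM,W,F,C\rangle$ — in particular handling the universal-property clause as an isomorphism-of-categories statement rather than a class-quantified one — and then verifying that the two isomorphisms produced (from transfer on the internal side, from the ordinary fundamental theorem on the external side) have the common target $\cN=\cM_{cf}$, which is legitimate precisely because Proposition~\ref{satzhomotopic} identifies *-homotopy with homotopy on fibrant-cofibrant objects. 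Once those bookkeeping points are settled, the argument is a clean transfer.
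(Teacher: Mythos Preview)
Your proposal is correct and rests on the same key observation as the paper: the homotopy category of a model category admits a description via fibrant-cofibrant objects and homotopy classes (so no infinite zig-zags), and Proposition~\ref{satzhomotopic} identifies the internal and external versions of this description.

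The paper's execution is more direct than yours. Rather than transferring the whole fundamental-theorem package $\Phi$ and producing two isomorphisms $\Ho{\cM}\xrightarrow{\sim}\cN\xleftarrow{\sim}\sHo{\cM}$ to an intermediate category, the paper simply observes that the canonical functor $\cM\to\sHo{\cM}$ inverts $W$ and hence factors as $\cM\to\Ho{\cM}\xrightarrow{F}\sHo{\cM}$ by the ordinary universal property of localization; $F$ is automatically the identity on objects, so it remains to show $F$ is fully faithful. For this, one uses the zig-zags $RQA\leftarrow QA\to A$ (which give isomorphisms in \emph{both} $\Ho{\cM}$ and $\sHo{\cM}$) to reduce to $A,X$ fibrant-cofibrant, and then $\Mor{\Ho{\cM}}{A}{X}=\pi(A,X)=\Mor{\sHo{\cM}}{A}{X}$ by the classical fact, its transfer, and Proposition~\ref{satzhomotopic}. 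Your route via the intermediate $\cN=\cM_{cf}$ works but involves more bookkeeping (formulating $\Phi$, handling the universal-property clause); the paper's single-comparison-functor argument sidesteps all of that.
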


\begin{proof}
  By the universal property of localization,
  the canonical functor $\cM\longrightarrow\sHo{\cM}$ factors
  uniquely as $\cM\longrightarrow\Ho{\cM}\xrightarrow{F}\sHo{\cM}$,
  where $F$ is the identity on objects.
  We claim that $F$ is also fully faithful.
  For this, let $A$ and $X$ be arbitrary objects of $\cM$.
  The diagrams $RQA\longleftarrow QA\longrightarrow A$
  and $RQX\longleftarrow QX\longrightarrow X$ give isomorphisms
  in $\Ho{\cM}$ and $\sHo{\cM}$, so by composing with them
  we can assume without loss of generality that $A$ and $X$
  are both fibrant and cofibrant.
  It is a well known fact that in this case $\Mor{\Ho{\cM}}{A}{X}=\pi(A,X)$,
  and the theorem follows from this fact, its transferred version
  and \ref{satzhomotopic}.\\
\end{proof}

\section{Internal Simplicial Sets}

\bigskip

Let $\Delta$ be the \emph{simplicial category},
i.e. the category whose objects are the finite ordinals
$[n]=\{0,1,\ldots,n\}$ for $n\in\Nn$
and whose morphisms are monotonic maps;
this is of course a $\cU_1$-small category, and it follows
immediately by transfer that $\str{\Delta}$ has objects
$[n]$ for $n\in\Nns$ and again monotonic maps as morphisms.

The category $\SSets$ of \emph{simplicial sets}
is by definition the category of functors $X_\bullet:\op{\Delta}\longrightarrow\Sets$,
$[n]\mapsto X_n$,
and for a set $\cU$, we denote by $\USSets{\cU}$ the full subcategory
of $\SSets$ of functors $X_\bullet$ with $X_n\in\cU$ for all $n\in\Nn$.

$\USSets{\cU_0}$ is obviously $\cU_1$-small,
and we put $\sSSets:=\str{\USSets{\cU_0}}$ and call the resulting *category
the category of *simplicial sets.

It is clear that $\sSSets$ is the category of internal functors $X_\bullet$
from $\str{\op{\Delta}}$ to the category of $\str{\cU_0}$-sets
with internal maps as morphisms,
and we use the term \emph{internal simplicial set} as a synonym for
``*simplicial set''.\\

\begin{defi}\label{defres}
  The functor
  $*:\Delta\longrightarrow\str{\Delta}$
  (which is the identity on objects and morphisms and obviously a
  full embedding)
  induces by composition a canonical ``restriction'' functor
  $\sSSets\longrightarrow\SSets$,
  which ``cuts off'' the infinite part of an internal simplicial set.
  We denote this functor by $\res$.\\
\end{defi}

Recall that there is an adjunction
$\langle|.|,\Sing,\varphi\rangle$ from $\SSets$ to $\Top$,
the category of topological spaces, in terms of which $\SSets$ can
be given a model structure by setting\\
\begin{itemize}
  \item
    $W:=\bigl\{w:X_\bullet\longrightarrow Y_\bullet\
    \bigl\vert\
    \mbox{$|w|:|X_\bullet|\longrightarrow|Y_\bullet|$
    is a weak homotopy equivalence}
    \bigr.\bigr\}$,\\
  \item
    $F:=\Bigl\{f:X_\bullet\longrightarrow Y_\bullet\
    \Bigl\vert\
    \forall n\in\Np:\
    \forall 0\leq k\leq n:\
    \forall x_0,\ldots,x_{k-1},x_{k+1},\ldots,x_n\in X_{n-1}:\
    \forall y\in Y_n:$\\
    $\mbox{}\hspace{13mm}
    \Bigl(\bigl[\forall 0\leq i<j\leq n:\
      i\neq k\neq j\Longrightarrow
      \delta^{j-1}x_i=\delta^ix_j
    \bigr]\wedge
    \bigl[
      \forall i\in[n]\setminus\{k\}: \delta^iy=fx_i
    \bigr]\Bigr)\Longrightarrow$
    \begin{flushright}
      $
      \exists x\in X_n:\
      px=y\wedge
      \forall i\in[n]\setminus\{k\}:\
      \delta^ix=x_i
      \Bigr.\Bigr\}$,
    \end{flushright}
  \item
    $C:=\bigl\{c:X_\bullet\longrightarrow Y_\bullet\
    \bigl\vert\
    \mbox{$c_n:X_n\longrightarrow Y_n$ is injective for all $n\in\Nn$}
    \bigr.\bigr\}$.\\

\end{itemize}
Furthermore,
there is a model structure on $\Top$ that turns
$\langle|.|,\Sing,\varphi\rangle:\SSets\longrightarrow\Top$
into a Quillen functor (which induces an equivalence on the
associated homotopy categories).

By restricting to $\USSets{\cU_0}$ and $\Top^{\cU_0}$ (the category
of topological spaces whose underlying sets are $\cU_0$-small),
we get a model structure on $\USSets{\cU_0}$ and a Quillen functor
$\USSets{\cU_0}\longrightarrow\Top^{\cU_0}$
and hence an internal model structure on $\sSSets$ and an internal
Quillen functor $\sSSets\longrightarrow\str{\Top^{\cU_0}}$.\\

For $n\in\Nn$ and $i\in[n+1]$, $d^i:[n]\longrightarrow[n+1]$
denotes the unique injective monotonic map
whose image is $[n+1]\setminus\{i\}$,
and for $n\in\Nn$ and $i\in[n]$,
$s^i:[n+1]\longrightarrow[n]$ denotes the unique surjective monotonic map
with $s^i(i)=s^i(i+1)$,
for a simplicial set $X_\bullet$,
$\delta_i:X_{n+1}\longrightarrow X_n$ denotes $X(d^i)$,
and $\sigma_i:X_n\longrightarrow X_{n+1}$ denotes $X(s^i)$.\\

\begin{satz}\label{satzrescompatible}
  The functor $\res:\sSSets\longrightarrow\SSets$
  respects $F$, $C$ and
  the terminal, the initial and fibrant and cofibrant objects.
\end{satz}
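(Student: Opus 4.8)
The plan is to reduce the statement to transfer via one elementary remark: the functor $\res$ changes nothing at finite levels. Indeed, for an internal simplicial set $X_\bullet$ and $n\in\Nn$ we have $\res(X_\bullet)_n=X_n$, and since $*:\Delta\to\str\Delta$ is the identity on objects and morphisms, $\res(X_\bullet)(\varphi)=X_\bullet(\varphi)$ for every morphism $\varphi$ of $\Delta$; in particular the face and degeneracy operators of $\res(X_\bullet)$ at finite levels are those of $X_\bullet$. Likewise, for a morphism $f:X_\bullet\to Y_\bullet$ of internal simplicial sets and finite $n$, the component $\res(f)_n$ equals $f_n$ (the components $c_n$ of an internal natural transformation being defined for all $n\in\Nns$, cf.\ \cite{enlcat}). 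Thus the level-$n$ data, $n$ finite, entering the conditions that define $F$ and $C$ for $\res(f)$ is literally the level-$n$ data of $f$.

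Next I would apply transfer. The internal model structure on $\sSSets$ is, by construction, the enlargement of the model structure on $\USSets{\cU_0}$ introduced above, so its fibrations and cofibrations form the sets $\str F$ and $\str C$. Since the conditions defining $F$ and $C$ are first order (this is precisely why they were written out as quantified formulas over $n\in\Np$, resp.\ $n\in\Nn$), transfer shows that $\str F$ and $\str C$ are described by the same formulas with $\Np$ replaced by $\Nps$ and $\Nn$ by $\Nns$. Hence a $^*$fibration $f$ satisfies the horn-lifting condition at every hyperfinite level, in particular at every finite level, which by the first paragraph says exactly that $\res(f)$ is a fibration in $\SSets$; and ``$f_n$ injective for all $n\in\Nns$'' trivially gives ``$\res(f)_n$ injective for all $n\in\Nn$''. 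Therefore $\res(\str F)\subseteq F$ and $\res(\str C)\subseteq C$. (The class $W$ is defined through geometric realisation and is not part of the claim, so it need not be discussed here.)

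For the objects, recall that terminal and initial objects in a category of set-valued functors are computed pointwise: the terminal object of $\USSets{\cU_0}$ is $[n]\mapsto\{\mathrm{pt}\}$ and the initial object is $[n]\mapsto\emptyset$. Transferring, the $^*$terminal object of $\sSSets$ is an internal simplicial set that is a one-element set in every degree $n\in\Nns$, and the $^*$initial object is empty in every degree. Applying $\res$ simply forgets the infinite degrees and returns the terminal, resp.\ initial, object of $\SSets$. Finally, an object $X_\bullet$ of $\sSSets$ is $^*$fibrant iff the unique morphism from $X_\bullet$ to the $^*$terminal object lies in $\str F$, and $^*$cofibrant iff the unique morphism from the $^*$initial object to $X_\bullet$ lies in $\str C$ (transfer of the usual characterisations). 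Since $\res$ is a functor sending $\str F$ into $F$, $\str C$ into $C$, the $^*$terminal object to the terminal object and the $^*$initial object to the initial object, it carries these structural morphisms to the corresponding structural morphisms of $\SSets$; hence $\res(X_\bullet)$ is fibrant, resp.\ cofibrant.

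I do not expect a genuine obstacle. The two points deserving a moment's attention are that the internal classes $F$ and $C$ really are given by the transferred first-order formulas — immediate from the construction of the model structure on $\SSets$ together with the explicit descriptions recalled in the text — and that $\res$ commutes with forming the structural maps $\emptyset\to X_\bullet$ and $X_\bullet\to *$, which is just functoriality once $\res$ of the $^*$initial and $^*$terminal objects has been identified with the initial and terminal objects of $\SSets$.
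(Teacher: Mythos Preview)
Your argument is correct and follows essentially the same route as the paper: use transfer on the first-order descriptions of $F$ and $C$ to see that $\str{F}$ and $\str{C}$ are given by the same formulas with $\Nn,\Np$ replaced by $\Nns,\Nps$, then restrict to finite $n$; identify the $^*$terminal and $^*$initial objects pointwise and observe that $\res$ sends them to the terminal and initial objects of $\SSets$; deduce the statement for fibrant and cofibrant objects formally. The paper's proof is terser but contains no additional idea; your version simply makes explicit the observation that $\res$ leaves all finite-level data unchanged.
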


\begin{proof}
  Since $F$ and $C$ for $\SSets$ are defined by first order formulas,
  the same formulas define $F$ and $C$ for $\sSSets$
  (except that $\Np$ and $\Nn$ have to be replaced by $\Nps$ respectively $\Nns$).
  This shows that $\res$ respects $F$ and $C$.

  The terminal object of $\SSets$ is the simplicial set
  $*_\bullet$ with $*_n=\{*\}$ for all $n\in\Nn$,
  so the terminal object of $\sSSets$ is the internal simplicial set
  $\str{*}_\bullet$ with $\str{*}_n=\{*\}$ for all $n\in\Nns$,
  which is clearly mapped to $*_\bullet$ under $\res$.
  Similarly, the initial object of $\SSets$ is the constant
  simplicial set $\emptyset_\bullet$ with $\emptyset_n=\emptyset$
  for all $n\in\Nn$,
  the initial internal simplicial set is $\str{\emptyset}_\bullet$
  with $\str{\emptyset}_n=\emptyset$ for all $n\in\Nns$,
  and again $\res\ \str{\emptyset}_\bullet=\emptyset_\bullet$
  holds trivially.

  Thus by definition of ``fibrant'' and ``cofibrant'',
  the rest of the proposition follows as well.\\
\end{proof}

\noindent
Recall the following definitions and results from \cite{kan2}
for simplicial sets:\\
\begin{itemize}
  \item
    If $X_\bullet$ is fibrant and $n\in\Nn$,
    then there is an equivalence relation $\sim$ on $X_n$
    defined by putting $y\sim z$ if and only if
    \[
      \left\{\begin{array}{ll}
        \exists x\in X_1:\ \delta^0x=y\wedge\delta^1x=z & ,n=0 \\
        \bigl[\forall 0\leq i\leq n:\ \delta^iy=\delta^iz\bigr]\wedge
        \exists x\in X_{n+1}:\
        \delta^ix=\left\{\begin{array}{ll}
          \sigma^{n-1}\delta^iy & ,0\leq i\leq n \\
          y & ,i=n \\
          z & ,i=n+1 \\
        \end{array}\right\} & ,n\geq 1\\
      \end{array}\right.
    \]
    \mbox{}
  \item
    A \emph{simplicial set with base point} is a pair $\langle X_\bullet,x\rangle$,
    where $X_\bullet$ is a simplicial set and $x\in X_0$.
    A \emph{morphism} $\langle X_\bullet,x\rangle\longrightarrow\langle Y_\bullet,y\rangle$
    of simplicial sets with base points is a morphism $F:X_\bullet\longrightarrow Y_\bullet$
    of simplicial sets with $f_0(x)=y$.
    We denote the resulting category by $\SSetsb$.\\
  \item
    For a fibrant simplicial set with base point $\langle X_\bullet,x\rangle$
    and $n\in\Nn$, define the \emph{$n$-th simplicial homotopy set of $\langle X_\bullet,x\rangle$}
    as $\tilde{\pi}_n(X_\bullet,x):=I_n/\sim$, where
    \[
      X_n\supseteq I_n:=
      \left\{\begin{array}{ll}
        X_0 & ,n=0 \\
        \bigl\{y\in X_1\
        \bigl\vert\ \delta^0y=\delta^1y=x
        \bigr.\bigr\} & ,n=1 \\
        \bigl\{y\in X_n\
        \bigl\vert\ \forall 0\leq i\leq n:\
          \delta^iy=\sigma^{n-2}\ldots\sigma^0x
        \bigr.\bigr\} & ,n\geq 2. \\
      \end{array}\right.
    \]
    The set $\tilde{\pi}_0(X_\bullet,x)$ is pointed by $\bar{x}$,
    and for $n\geq 1$, $\tilde{\pi}_n(X_\bullet,x)$ is a group,
    the \emph{$n$-th simplicial homotopy group of $\langle X,x\rangle$},
    whose multiplication is given as follows:
    Let $y,y'\in I_n$.
    Since $X_\bullet$ is fibrant,
    there exists $z\in X_{n+1}$ satisfying
    \[
      \delta^{n-1}z=y\wedge
      \delta^{n+1}z=y'\wedge
      \forall 0\leq i\leq n-2:\
        \delta^iz=\sigma^{n-1}\ldots\sigma^0x.
    \]
    Put
    $\overline{y}\cdot\overline{y'}:=\overline{\delta^nz}\in\tilde{\pi}_n(X,x)$.\\
  \item
    If $f_\bullet:\langle X_\bullet,x\rangle\longrightarrow\langle Y_\bullet,y\rangle$
    is a morphism of \emph{fibrant} simplicial sets with base points,
    then $f_n:X_n\longrightarrow Y_n$ induces a map
    $\tilde{\pi}_n(f):\tilde{\pi}_n(X_\bullet,x)\longrightarrow\tilde{\pi}_n(Y_\bullet,y)$
    for all $n\in\Nn$ which is a morphism of pointed sets for $n=0$
    and a group homomorphism for $n\geq 1$.
    In this way, we get functors $\tilde{\pi}_n$ from the full subcategory of
    $\SSetsb$ of fibrant objects to the category of pointed sets (for $n=0$)
    respectively the category of groups (for $n\geq 1$).\\
  \item
    If $\langle X_\bullet,x\rangle$ is an arbitrary object of $\SSetsb$,
    the \emph{$n$-th homotopy set}
    respectively \emph{group} is defined as
    $\pi_n(X_\bullet,x):=\pi_n(|X_\bullet|,|x|)$ for $n\in\Nn$, so
    $\pi_n$ defines a functor from $\SSetsb$ to
    the category of pointed sets respectively groups.
    This functor --- when restricted to the full subcategory of fibrant
    objects --- is canonically isomorphic
    to the functor $\tilde{\pi}_n$ for all $n\in\Nn$.\\
  \item
    A sequence of morphisms in $\SSetsb$
    \[
      \xymatrix{
        {\langle F_\bullet,f\rangle} \ar[r]^{q_\bullet} &
        {\langle E_\bullet,e\rangle} \ar[r]^{p_\bullet} &
        {\langle B_\bullet,b\rangle} \\
      }
    \]
    is called a \emph{fibre sequence} if
    \begin{itemize}
      \item
        $p\in F\wedge\forall n\in\Nn:\
        \mbox{$p_n$ is surjective}$,
      \item
        $q\in C$ and
      \item
        $
          \forall n\in\Nn:\
          q_n(F_n)=
          \left\{\begin{array}{ll}
            p_0^{-1}(b) & ,n=0 \\
            p_n^{-1}(\sigma^{n-1}\ldots\sigma^0b) & ,n\geq 1 \\
          \end{array}\right.
        $ \\
    \end{itemize}
    A fibre sequence as above functorially defines a long exact
    sequence
    \[
      \ldots \xrightarrow{\pi_{n+1}(p_\bullet)}
      \pi_{n+1}(B_\bullet,b)\xrightarrow{\delta}
      \pi_n(F_\bullet,f)\xrightarrow{\pi_n(q_\bullet)}
      \pi_n(E_\bullet,e)\xrightarrow{\pi_n(p_\bullet)}
      \pi_n(B_\bullet,b)\xrightarrow{\delta}
      \ldots
    \]
    of groups respectively pointed sets.\\
  \item
    There is an endofunctor $\Ex{}$ of $\SSets$
    and a natural transformation $\ex{}:\id{\SSets}\longrightarrow\Ex{}$
    with the following properties:
    \begin{itemize}
      \item
        $\Ex{X_\bullet}$ is fibrant for all simplicial sets $X_\bullet$,
      \item
        $\ex{}:X_\bullet\longrightarrow\Ex{X_\bullet}$
        is in $W$ for all simplicial sets $X_\bullet$.
      \item
        If
        \[
          \xymatrix{
            {\langle F_\bullet,f\rangle} \ar[r]^{q_\bullet} &
            {\langle E_\bullet,e\rangle} \ar[r]^{p_\bullet} &
            {\langle B_\bullet,b\rangle} \\
          }
        \]
        is a fibre sequence,
        then
        \[
          \xymatrix{
            {\langle\Ex{F_\bullet},\ex{(f)}\rangle} \ar[r]^{\Ex{q_\bullet}} &
            {\langle\Ex{E_\bullet},\ex{(e)}\rangle} \ar[r]^{\Ex{p_\bullet}} &
            {\langle\Ex{B_\bullet},\ex{(b)}\rangle} \\
          }
        \]
        is also a fibre sequence.\\
    \end{itemize}
\end{itemize}

These notions can be restricted to $\USSets{\cU_0}$
and then enlarged to $\sSSets$, i.e. we have
the category $\sSSetsb$ of internal simplicial sets with base points,
internal (simplicial) homotopy functors
$\str{\pi}_n$ and $\str{\tilde{\pi}_n}$ (for $n\in\Nns$)
and internal fibred sequences
as well as $\sEx{}$ and $\sex{}$
with analogous properties.

Note that $\res:\sSSets\longrightarrow\SSets$
induces a functor $\res:\sSSetsb\longrightarrow\SSetsb$
by sending the internal base point to itself.\\

\begin{thm}\label{thmpi}
  There is a canonical natural transformation
  $\varphi_n:\pi_n\circ\res\Longrightarrow\str{\pi}_n$
  of functors from $\sSSetsb$
  to groups respectively sets for all $n\in\Nn$.
  Moreover, if
  \[
    \xymatrix{
      {\langle F_\bullet,f\rangle} \ar[r]^{q_\bullet} &
      {\langle E_\bullet,e\rangle} \ar[r]^{p_\bullet} &
      {\langle B_\bullet,b\rangle} \\
    }
  \]
  is an internal fibre sequence,
  then
  \[
    \xymatrix{
      {\langle\res\ F_\bullet,f\rangle} \ar[r]^{\res\ q_\bullet} &
      {\langle\res\ E_\bullet,e\rangle} \ar[r]^{\res\ p_\bullet} &
      {\langle\res\ B_\bullet,b\rangle} \\
    }
  \]
  is a fibre sequence and
  \[
    \xymatrix{
      {\pi_{n+1}(\res\ B_\bullet,b)} \ar[r]^{\str{\delta}} \ar[d]_{\varphi_{n+1}} &
      {\pi_n(\res\ F_\bullet,f)} \ar[d]^{\varphi_n} \\
      {\str{\pi}_n(B_\bullet,b)} \ar[r]_\delta &
      {\str{\pi}_n(F_\bullet,f)} \\
    }
  \]
  commutes for all $n\in\Nn$.
\end{thm}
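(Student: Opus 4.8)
The plan is to reduce the statement to the case where the internal simplicial set is $*$-fibrant, by means of $\sEx{}$, and there to observe that the comparison is essentially tautological: on each fixed \emph{finite} level the internal combinatorial homotopy group is built from literally the same data as the external one applied to the restriction. Concretely, if $X_\bullet$ is $*$-fibrant then $\res\,X_\bullet$ is fibrant by Proposition~\ref{satzrescompatible}; and for a fixed $n\in\Nn$ the subset $I_n\subseteq X_n$, the equivalence relation $\sim$ on it and (for $n\geq 1$) the group law on $I_n/{\sim}$ are defined by finitely many conditions involving only the sets $X_0,\ldots,X_{n+1}$ and the finitely many face and degeneracy maps between them. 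Since $\res$ changes none of these on finite levels and $n$ is standard, transfer leaves the defining formulas untouched; so the internal $I_n$ attached to $X_\bullet$ is exactly the $I_n$ of $\res\,X_\bullet$, the relations agree on it, and the group laws agree, whence $\str{\tilde{\pi}_n}(X_\bullet,x)=\tilde{\pi}_n(\res\,X_\bullet,x)$ as pointed sets, respectively groups, naturally in the $*$-fibrant object $\langle X_\bullet,x\rangle$. Composing with the canonical isomorphism $\pi_n\cong\tilde{\pi}_n$ on fibrant objects and with its transfer on $*$-fibrant objects, I obtain a natural isomorphism $\theta_n\colon\pi_n\circ\res\xrightarrow{\ \sim\ }\str{\pi}_n$ of functors on the full subcategory of $*$-fibrant objects of $\sSSetsb$.

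Next I would define $\varphi_n$ on all of $\sSSetsb$. Since $\sex{}\colon X_\bullet\to\sEx{X_\bullet}$ lies in $\str W$ and $\str{\pi}_n$ inverts $\str W$ (by transfer of the fact that $\pi_n$ inverts $W$), I let $\varphi_n$ at $\langle X_\bullet,x\rangle$ be the composite of $\pi_n(\res\,\sex{})\colon\pi_n(\res\,X_\bullet,x)\to\pi_n(\res\,\sEx{X_\bullet},x)$, of the isomorphism $\theta_n$ at $\sEx{X_\bullet}$ (which is always $*$-fibrant), and of the inverse of $\str{\pi}_n(\sex{})\colon\str{\pi}_n(X_\bullet,x)\to\str{\pi}_n(\sEx{X_\bullet},x)$. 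Each constituent is natural in $\langle X_\bullet,x\rangle$ --- the first by functoriality of $\pi_n\circ\res$ together with naturality of $\sex{}$, the second because $\theta_n$ is natural over $*$-fibrant objects, the third by naturality of $\sex{}$ --- so $\varphi_n$ is a natural transformation $\pi_n\circ\res\Rightarrow\str{\pi}_n$; and when $X_\bullet$ is itself $*$-fibrant, naturality of $\theta_n$ with respect to $\sex{}$ identifies $\varphi_n$ with $\theta_n$, so the fibrant comparison is recovered.

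For the fibre-sequence part, note first that $\res$ carries an internal fibre sequence to a fibre sequence: the defining conditions are $p_\bullet\in F$, $q_\bullet\in C$, levelwise surjectivity of $p_\bullet$ and prescribed images for the $q_n$, and the first two are respected by $\res$ by Proposition~\ref{satzrescompatible} while the others, being quantified over the level, persist after restriction to finite levels. For the commuting square I would apply $\sEx{}$ to the given internal fibre sequence, obtaining --- by transfer of the $\Ex{}$-stability of fibre sequences --- a further internal fibre sequence together with the morphism of fibre sequences induced by $\sex{}$, and then apply $\res$ to get the analogous restricted data. Unwinding the definition of $\varphi_n$ and invoking functoriality of the long exact sequence, internally and after restriction, under these morphisms reduces the claim to the single fibre sequence $\res\,\sEx{F_\bullet}\to\res\,\sEx{E_\bullet}\to\res\,\sEx{B_\bullet}$ of fibrant simplicial sets and the isomorphism $\theta_n$; there the connecting homomorphism is given by the familiar combinatorial recipe --- lift a representing simplex along the levelwise surjective fibration, take the appropriate face --- purely in terms of the $\tilde{\pi}_n$, hence by a first-order formula, so it coincides with its transferred version under the identification $\str{\tilde{\pi}_n}=\tilde{\pi}_n\circ\res$, and the square commutes.

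The steps I expect to require the most care are the two ``quotient'' bookkeeping points. First, one must check not only that the formulas defining $I_n$ and $\sim$ are untouched, but that passing to the internal quotient introduces no new elements, so that $\str{\tilde{\pi}_n}(X_\bullet,x)$ really is the external quotient of the transferred $I_n$ by $\str{\sim}$ --- a fact about enlargements of set-level quotients, of the kind treated in \cite{enlcat}. Second, for the fibre-sequence statement one needs that the long exact sequence supplied abstractly above agrees on fibrant objects with the classical combinatorial one, so that its connecting map is genuinely given by a first-order recipe; without this the last reduction does not go through.
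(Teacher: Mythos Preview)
Your proof is correct and follows essentially the same approach as the paper: both reduce to the $*$-fibrant case via $\sEx{}$, identify $\str{\tilde{\pi}_n}$ with $\tilde{\pi}_n\circ\res$ there using that the simplicial homotopy groups are first order and depend only on finitely many levels, and then define $\varphi_n$ as the composite $\alpha_n^{-1}\beta_n$ in your notation $(\str{\pi}_n(\sex{}))^{-1}\circ\theta_n\circ\pi_n(\res\,\sex{})$. Your treatment of the fibre-sequence compatibility is in fact more explicit than the paper's, which dispatches it in a single sentence invoking the $\Ex{}$-stability of fibre sequences and the construction of $\varphi_n$.
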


\begin{proof}
  Let $\langle X_\bullet,x\rangle$ be an object of $\sSSetsb$.
  Then on the one hand, we have canonical isomorphisms
  \[
    \xymatrix@C=17mm{
      {\alpha_n:\ \str{\pi_n}(X_\bullet,x)} \ar[r]^-{\str{\pi}_n(\str{\ex{}})}_-\sim &
      {\str{\pi_n}(\str{\Ex{X_\bullet}},\str{\ex{x}})} \ar[r]_-\sim &
      {\str{\tilde{\pi}_n}(\str{\Ex{X_\bullet}},\str{\ex{x}})}
    }
  \]
  for all $n\in\Nns$,
  on the other hand $\res\ \Ex{X_\bullet}$ is fibrant by \ref{satzrescompatible},
  so we get morphisms
  \[
    \xymatrix@C=17mm{
      {\beta_n:\ \pi_n(\res\ X_\bullet,x)} \ar[r]^-{\pi_n(\res\str{\ex{}})} &
      {\pi_n(\res\ \str{\Ex{X_\bullet}},\str{\ex{x}})} \ar[r]_-\sim &
      {\tilde{\pi}_n(\res\ \str{\Ex{X_\bullet}},\str{\ex{x}})}
    }
  \]
  for all $n\in\Nn$.
  But the definition of the $n$-th \emph{simplicial} homotopy set respectively
  group of a simplicial set with base point $\langle Y_\bullet,y\rangle$
  is obviously first order and only depends on
  the restriction of $Y$ to the full subcategory of $\Delta$ with objects
  $\{[0],[1],\ldots,[n+1]\}$, so that
  \[
    \tilde{\pi}_n(\res\ \str{\Ex{X_\bullet}},\str{\ex{x}})
    =
    \str{\tilde{\pi}_n}(\str{\Ex{X_\bullet}},\str{\ex{x}})
  \]
  for $n\in\Nn$.
  Putting everything together, we can define
  $\varphi_n:=\alpha_n^{-1}\beta_n$,
  and this is clearly functorial and hence defines a natural transformation.

  That $\res$ maps internal fibre sequences to fibre sequences
  is clear from \ref{satzrescompatible} and the definition of
  ``fibre sequence''.
  So the last part of the theorem follows from the fact
  that $\Ex{}$ respects fibre sequences and from the construction of $\varphi_n$.\\
\end{proof}

\section{Nonstandard $K$-Theory}

\bigskip

Recall the notion of an \emph{exact category} from \cite{quillenK}: \\

Let $\cM$ be a small additive category,
and let $\cE$ be a set of composable pairs
$\langle M'\xrightarrow{i}M,M\xrightarrow{j}M''\rangle$
of morphisms in $\cM$.
A morphism $i$ in $\cM$
is called an \emph{admissible monomorphism}
if there is a morphism $j$ in $\cM$
with $\langle i,j\rangle\in\cE$,
and $j\in\MorC{\cM}$
is called an \emph{admissible epimorphism}
if there exists $i\in\MorC{\cM}$ with $\langle i,j\rangle\in\cE$.

The pair $\langle\cM,\cE\rangle$ is a \emph{(small) exact category}
if the following conditions are satisfied:\\

\begin{itemize}
  \item
    If
    \[
      \xymatrix{
        {M'} \ar[r]^i \ar[d]_\alpha^\wr &
        M \ar[r]^j \ar[d]_\beta^\wr &
        {M''} \ar[d]_\gamma^\wr \\
        {N'} \ar[r]_{\tilde{i}} &
        N \ar[r]_{\tilde{j}} &
        {N''} \\
      }
    \]
    is a commutative diagram in $\cM$ with $\langle i,j\rangle\in\cE$,
    then $\langle\tilde{i},\tilde{j}\rangle\in\cE$,\\
  \item
    for all objects $M'$, $M''$ of $\cM$,
    \[
      \left\langle
        M'\xrightarrow{(\id{M'},0)}M'\oplus M'',
        M'\oplus M''\xrightarrow{\mathrm{pr}_2}M''
      \right\rangle\in\cE,
    \]
  \item
    for all $\langle i,j\rangle\in\cE$,
    $i$ is a kernel of $j$, and $j$ is a cokernel of $i$,\\
  \item
    the set of admissible epimorphisms is closed under composition,\\
  \item
    the set of admissible monomorphisms is closed under composition,\\
  \item
    pullbacks of admissible epimorphisms along arbitrary morphisms
    exist in $\cM$ and are again admissible epimorphisms,\\
  \item
    pushouts of admissible monomorphisms along arbitrary morphisms
    exist in $\cM$ and are again admissible monomorphisms,\\
  \item
    Let $M\xrightarrow{j}M''$ be a morphism in $\cM$ which
    has a kernel. If there is a morphism $N\xrightarrow{\beta}M$
    in $\cM$ such that $j\beta$ is an admissible epimorphism,
    then $j$ is an admissible epimorphism,\\
  \item
    Let $M'\xrightarrow{i}M$ be a morphism in $\cM$ which
    has a cokernel. If there is a morphism $M\xrightarrow{\beta}N$
    in $\cM$ such that $\beta i$ is an admissible monomorphism,
    then $i$ is an admissible monomorphism.\\
\end{itemize}

Recall further that an \emph{exact functor} between
exact categories $\langle\cM,\cE\rangle$ and $\langle\cM',\cE'\rangle$
is an additive functor $F:\cM\longrightarrow\cM'$ with
\[
  \forall\langle i,j\rangle\in\cE:\
  \langle Fi,Fj\rangle\in\cE',
\]
and a \emph{natural transformation} between exact functors
is just a natural transformation in the ordinary sense.

Note that if $\cA$ is an \emph{abelian} category,
then $\langle\cA,\cE\rangle$ is exact with
\[
  \cE:=\left\{(i,j)\
  \left\vert\
  \mbox{$0\longrightarrow A'\xrightarrow{i}A\xrightarrow{j}B''\longrightarrow 0$
  exact}
  \right.\right\},
\]
and if not stated otherwise, an abelian category is always considered
as an exact category in this way.

\begin{defi}\label{defintexcat}
  Call an exact category $\langle\cM,\cE\rangle$ \emph{$\cU_1$-small}
  if $\cM$, the underlying additive category, is $\cU_1$-small,
  and let $\ECAT^{\cU_1}$ be the 2-category
  with $\cU_1$-small exact categories as objects,
  exact functors as morphisms
  and natural transformations as 2-cells.

  $\ECAT^{\cU_1}$ is obviously $\cU_2$-small and hence an object
  of $\UCat{\cU_2}{2}$.
  Put $\sECAT:=\str{[\ECAT^{\cU_1}]}$.
  This is an object of $\sCat{2}$ and hence by definition an internal
  2-category ---
  call its objects \emph{$^*$exact categories},
  its morphisms \emph{*exact functors}
  and its 2-cells \emph{*natural transformations}.\\
\end{defi}

\begin{satz}\label{satzintexcat}
  $\sECAT$ is the 2-category having
  $\cU_2$-small \emph{internal} exact categories as objects,
  \emph{internal} exact functors as morphisms
  and \emph{internal} natural transformations as 2-cells.
\end{satz}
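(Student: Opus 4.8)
The plan is to follow exactly the pattern established by Proposition \ref{satzintcat} and Proposition \ref{satzintmodcat}: the statement is essentially a ``transfer bookkeeping'' result, and the only real content is to check that each piece of the definition of an exact category, an exact functor, and a natural transformation is expressible by a first order formula, so that the transfer principle applies to $\ECAT^{\cU_1}\in\UCat{\cU_2}{2}$ and identifies $\sECAT=\str{[\ECAT^{\cU_1}]}$ with the asserted $2$-category of internal exact categories. First I would unwind the definition of the enlargement: by construction an object of $\sECAT$ is (the transfer of) a pair $\langle\cM,\cE\rangle$, where $\cM$ is a $\str{\cU_1}$-small additive category and $\cE$ is an (internal) set of composable pairs of morphisms of $\cM$ subject to the transferred axioms. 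So the task reduces to inspecting the nine bulleted axioms for an exact category and exhibiting each as a first order sentence in the language that has the data $\langle M_0,M_1,s,t,\id{},\circ\rangle$ of the underlying category together with the predicate ``$\langle i,j\rangle\in\cE$''.

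Next I would actually go through the axioms, invoking the earlier work wherever possible. The notions ``admissible monomorphism'' and ``admissible epimorphism'' are defined by a single existential quantifier over $M_1$, hence first order. The diagram/pushforward axiom, the split-exactness axiom, the closure of admissible epis and monos under composition, and the two ``cancellation'' axioms for maps possessing a kernel or cokernel are all visibly bounded first order statements once one notes — as was already done for model categories in \cite{enlcat} — that ``$f$ has a kernel'', ``$i$ is a kernel of $j$'', ``$j$ is a cokernel of $i$'', and the existence (and identification) of pullbacks and pushouts along given morphisms are each first order over the category data; similarly ``$\cM$ is additive'', i.e. the existence of a zero object, biproducts and an abelian group structure on hom-sets compatible with composition, was shown to be first order in \cite{enlcat} and in \ref{beminttwocat}. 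Assembling these, the full conjunction defining ``$\langle\cM,\cE\rangle$ is a $\cU_1$-small exact category'' is first order, so by transfer the objects of $\sECAT$ are precisely the $\str{\cU_2}$-small internal exact categories. For the morphism and $2$-cell parts one notes that ``$F$ is an additive functor'' and ``$F$ is exact'' (the condition $\forall\langle i,j\rangle\in\cE:\ \langle Fi,Fj\rangle\in\cE'$) are first order, as is the notion of a natural transformation, the latter again by \cite{enlcat}; transfer then gives the claimed description of morphisms and $2$-cells, and the compatibility of $s$, $t$, $\id{}$ and the $2$-categorical compositions is automatic because these were first order in the passage from $\ECAT^{\cU_1}$ to $\sECAT$.

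The main obstacle, such as it is, is purely expository rather than mathematical: it is making sure that the genuinely ``global'' sounding hypotheses — existence of all relevant kernels, cokernels, pullbacks and pushouts, and the additive structure — are phrased so as to be manifestly bounded first order over the category data, exactly the point that \ref{defintmodcat} and its surrounding discussion flagged for the ``finite limits and colimits'' axiom of a model category. Since in the exact-category axioms every such limit or colimit is taken along \emph{specified} morphisms (pullback of a \emph{given} admissible epi along a \emph{given} morphism, etc.) rather than asserted to exist for all diagrams of a shape, this is in fact easier than the model category case, and no new idea beyond the toolkit of \cite{enlcat} is required. I would therefore keep the proof short, mirroring the proofs of \ref{satzintcat} and \ref{satzintmodcat}: remark that all defining conditions are first order, cite \cite{enlcat} for the additive-structure and natural-transformation parts, and let transfer do the rest.

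\begin{proof}
  All of this is completely analogous to the proofs of \ref{satzintcat} and \ref{satzintmodcat}.
  By transfer, an object of $\sECAT$ is a pair $\langle\cM,\cE\rangle$
  where $\cM$ is a $\str{\cU_1}$-small additive category and $\cE$ an internal set
  of composable pairs of morphisms of $\cM$, subject to the transferred versions of the
  axioms defining an exact category.
  Now ``$\cM$ is additive'' is first order by \cite{enlcat} (compare \ref{beminttwocat}),
  and each of the nine axioms for an exact category is first order as well:
  ``admissible monomorphism'' and ``admissible epimorphism'' are defined by a single
  existential quantifier over $M_1$, and the remaining conditions --- the functoriality
  of $\cE$ under commutative diagrams, split exactness, the kernel/cokernel property
  (``$f$ has a kernel'', ``$i$ is a kernel of $j$'' and ``$j$ is a cokernel of $i$'' being
  first order over the category data, as in \cite{enlcat}), closure of admissible epis
  and monos under composition, the existence of pullbacks of admissible epis and pushouts
  of admissible monos \emph{along given morphisms} together with the assertion that these
  are again admissible, and the two cancellation axioms --- are visibly bounded first order
  statements. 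Hence the full conjunction defining ``$\cU_1$-small exact category'' is first
  order, and transfer proves the object part of the proposition.
  For the morphism part, ``$F$ is an additive functor'' is first order by \cite{enlcat},
  and the exactness condition $\forall\langle i,j\rangle\in\cE:\ \langle Fi,Fj\rangle\in\cE'$
  is obviously first order, so transfer identifies the morphisms of $\sECAT$ with the
  internal exact functors. Finally, that natural transformations are first order was shown
  in \cite{enlcat}, giving the $2$-cell part; compatibility with $s$, $t$, $\id{}$ and the
  $2$-categorical compositions is automatic since these were already first order in the
  passage from $\ECAT^{\cU_1}$ to $\sECAT$.\\
\end{proof}
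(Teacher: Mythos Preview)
Your proposal is correct and follows exactly the same approach as the paper's proof, which consists of a single sentence observing that the conditions defining an exact category are all first order and referring back to the proof of \ref{satzintmodcat}. You have simply unpacked that sentence, making explicit which clauses are first order and why, and citing \cite{enlcat} for the additive-structure and natural-transformation parts; there is no substantive difference in strategy.
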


\begin{proof}
  Since the conditions defining an exact category are all first order,
  this is completely analogous to the proof of \ref{satzintmodcat}.
\end{proof}

By definition, the \emph{Quillen category} $\Qu{\langle\cM,\cE\rangle}$
(also simply denoted by $\Qu{\cM}$ if the set $\cE$ is understood)
associated to an exact
category $\langle\cM,\cE\rangle$
is the categories whose objects are the objects of $\cM$
and whose morphisms from $M$ to $M'$ are isomorphism classes of diagrams
\[
  (j,i):\ M\xleftarrow{j}N\xrightarrow{i}M'
\]
in $\cM$
with an admissible epimorphism $j$ and an admissible monomorphism $i$,
where two diagrams $(j,i)$ and $(\tilde{j},\tilde{i})$ are isomorphic
if there is a commutative diagram
\[
  \xymatrix@C=10mm@R=2mm{
    & {\tilde{N}} \ar[dl]_{\tilde{j}} \ar[dr]^{\tilde{i}}
      \ar[dd]^\beta_\wr \\
    M & & {M'} \\
    & N \ar[ul]^j \ar[ur]_i \\
  }
\]
in $\cM$ with an isomorphism $\beta$.
Composition of morphisms
$M\xleftarrow{j}N\xrightarrow{i}M'$
and $M'\xleftarrow{j'}N'\xrightarrow{i'}M''$
is defined as $(jj'',i'i''):M\longrightarrow M''$,
where $j'':M''\longrightarrow N$ is a pullback of $j'$ along $i$
and $i'':M''\longrightarrow N'$ is the corresponding projection
\[
  \xymatrix{
    {M''} \ar[d]_{j''} \ar[r]^{i''} \ar@{}[rd]|{\Box} &
    {N'} \ar[r]^{i'} \ar[d]^{j'} &
    {M''} \\
    N \ar[d]_j \ar[r]_i &
    {M'} \\
    {M} \\
  }
\]
(it follows from the axioms for an exact category that $i''$ and $j''$
are an admissible monomorphism respectively epimorphism).\\

In this way, we get the \emph{Quillen} 2-functor $\Qu{}:\ECAT\longrightarrow\Cat{1}$,
$\langle\cM,\cE\rangle\mapsto\Qu{\langle\cM,\cE\rangle}$,
and by transfer the \emph{*Quillen} 2-functor $\sQu{}:\sECAT\longrightarrow\sCat{1}$
from internal exact categories to internal categories.

\begin{lemma}\label{lemmaQuillen}
  Let $\langle\cM,\cE\rangle$ be an internal exact category.
  Then $\Qu{\cM}$ and $\sQu{\cM}$ are canonically isomorphic.
\end{lemma}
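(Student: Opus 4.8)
The plan is to exhibit a canonical functor $\Qu{\cM}\longrightarrow\sQu{\cM}$ which is the identity on objects and to show it is bijective on Hom-sets. Both categories have the same objects, namely the objects of $\cM$ (the enlargement does not change the object set of an internal category), so the only content is in the morphisms. First I would recall that a morphism $M\to M'$ in $\Qu{\cM}$ is an isomorphism class of a single diagram $M\xleftarrow{j}N\xrightarrow{i}M'$ with $j$ an admissible epimorphism and $i$ an admissible monomorphism, and that a morphism in $\sQu{\cM}$ is --- by transfer of exactly this description --- an \emph{internal} isomorphism class of such an internal diagram. The crucial point, in contrast to the situation for arbitrary localizations discussed just before Theorem \ref{thmmodel}, is that here a morphism is a \emph{single} spans up to isomorphism, not a word of unbounded (*finite) length; there is no combinatorial explosion to control.

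Concretely, I would argue as follows. Fix objects $M,M'$ of $\cM$. The datum of a span $(j,i)$ with the admissibility constraints is a first-order condition on the triple $(N,j,i)$ with parameters $M$, $M'$, $\cM$, $\cE$: admissibility of $j$ says $\exists i'\,:\,\langle i',j\rangle\in\cE$, admissibility of $i$ says $\exists j'\,:\,\langle i,j'\rangle\in\cE$, and the source/target conditions are first order. Likewise the isomorphism relation between two spans $(j,i)$, $(\tilde j,\tilde i)$ --- existence of an isomorphism $\beta$ in $\cM$ making the evident diagram commute --- is first order. Hence, by transfer, an internal morphism of $\sQu{\cM}$ from $M$ to $M'$ is precisely an internal equivalence class of a span $(j,i)$ in the internal category $\cM$ satisfying the transferred (i.e.\ same) conditions, and the internal equivalence relation is the transfer of the external one. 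But a span internal to $\cM$ is just a span in $\cM$, since $\cM$ as an internal category \emph{is} a category with the same morphism set; and two such spans are internally equivalent iff they are equivalent, because the witnessing isomorphism $\beta$ lives in $\cM$ either way. This gives a canonical bijection $\Mor{\Qu{\cM}}{M}{M'}\xrightarrow{\ \sim\ }\Mor{\sQu{\cM}}{M}{M'}$.

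It then remains to check that these bijections assemble into a functor, i.e.\ are compatible with identities and composition. The identity on $M$ in $\Qu{\cM}$ is the class of $M\xleftarrow{\id{M}}M\xrightarrow{\id{M}}M$, and its image is visibly the transferred identity, which is the identity in $\sQu{\cM}$. For composition, recall that the composite of $(j,i)$ and $(j',i')$ is built from a pullback of $j'$ along $i$ in $\cM$; the existence and the universal property of this pullback are first-order statements, and the axioms of an exact category guarantee the resulting maps are again admissible (this was noted when the Quillen category was defined). Since pullbacks in the internal category $\cM$ are pullbacks in $\cM$ --- finite limits being preserved, as recalled in section 3 and \cite{enlcat} --- the composition rule transfers on the nose, so the bijection intertwines the two composition laws. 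One subtlety to flag: the composite span is only well-defined up to the equivalence relation, depending on a choice of pullback, but since that indeterminacy is itself governed by the first-order data, the transferred composition is well-defined for the same reason the original one is, and no extra argument is needed.

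The main obstacle --- really the only place one must be a little careful --- is conceptual rather than technical: one must make sure that ``internal isomorphism class of an internal span'' genuinely coincides with ``isomorphism class of a span,'' i.e.\ that passing to the enlargement does not secretly enlarge the equivalence classes or introduce new spans. This is exactly where the boundedness (a morphism = one span, not an arbitrary word) is used, and it is what distinguishes the Quillen category from a general localization. Once that is pinned down via the first-order nature of both the span data and the isomorphism relation, the functoriality is a routine transfer of the pullback construction, entirely parallel to the proof of Theorem \ref{thmmodel}.
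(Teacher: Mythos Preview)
Your argument is correct and follows exactly the paper's approach: the paper's proof is the single sentence ``This follows immediately from the fact that the definition of morphisms and their composition in $\Qu{\cM}$ is obviously first order,'' and you have simply unpacked what that sentence means. Your explicit contrast with general localizations --- a morphism here being a single span rather than a word of unbounded length --- is the right intuition for why this lemma is trivial while the analogous statement for arbitrary localizations fails, and it mirrors the remark the paper makes just before Theorem~\ref{thmmodel}.
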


\begin{proof}
  This follows immediately from the fact that the definition of morphisms
  and their composition in $\Qu{\cM}$ is obviously first order.
\end{proof}

\noindent
Recall that the \emph{nerve} of a small category $\cC$
is the simplicial set $\Ne{\cC}_\bullet$,
where $\Ne{\cC}_n$ is the set of sequences
\[
  X_0\xrightarrow{f_0}X_1\xrightarrow{f_1}\ldots\xrightarrow{f_{n-1}}X_n
\]
in $\cC$
and where $\delta_i$ (respectively $\sigma_i$) is obtained
by deleting the object $X_i$ (respectively replacing $X_i$ by
$X_i\xrightarrow{\id{X_i}}X_i$) in the evident way.
If $F:\cC\longrightarrow \cC'$ is a functor,
we obviously get an induced morphism of simplicial sets
$\Ne{F}:\Ne{\cC}\longrightarrow\Ne{\cC'}$ between the nerves.

By transfer, we define the notion of the \emph{*nerve} $\sNe{\cC}$
of an internal category $\cC$, which is a *simplicial set.

\begin{lemma}\label{lemmanerve}
  If $\cC$ is an internal category,
  then $\res(\sNe{\cC})=\Ne{\cC}$,
  and if $F:\cC\longrightarrow\cD$ is an internal functor between internal categories,
  then $\res(\sNe{F})=\Ne{F}$.
\end{lemma}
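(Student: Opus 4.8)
The plan is to compute both sides of the claimed equality degree by degree and to reduce everything, for each fixed \emph{standard} degree, to a transfer statement. Recall from \ref{defres} that $\res$ is precomposition with the full embedding $*:\Delta\hookrightarrow\str{\Delta}$, which is the identity on objects and on morphisms. For any two finite ordinals $[m],[n]$ the set $\Hom{\Delta}{[m]}{[n]}$ is finite, hence fixed by transfer, so $\Hom{\str{\Delta}}{[m]}{[n]}=\Hom{\Delta}{[m]}{[n]}$; consequently $\res$ does nothing but forget the components $\sNe{\cC}_{[N]}$ at the infinite ordinals $[N]$, $N\in\Nns$, and keep the components $\sNe{\cC}_{[n]}$, $n\in\Nn$, together with all their structure maps $\sNe{\cC}(\mu)$ for $\mu$ a morphism of $\Delta$. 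Thus it suffices to prove that $\sNe{\cC}_n=\Ne{\cC}_n$ for every $n\in\Nn$ and that the face and degeneracy maps $\delta_i,\sigma_i$ on these sets coincide with the external ones.

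First I would spell out $\sNe{\cC}_n$ for a fixed standard $n$. By \ref{satzintcat} the internal category $\cC$ is an honest category with an internal object set and an internal morphism set, and by transfer $\sNe{\cC}_n$ is described by the transferred version of the formula defining $\Ne{\cC}_n$: it is the set of tuples $\langle X_0,f_0,X_1,\ldots,f_{n-1},X_n\rangle$ subject to the conjunction $\bigwedge_{i<n}\bigl(s(f_i)=X_i\wedge t(f_i)=X_{i+1}\bigr)$. Since $n$ is a \emph{standard} natural number, this is a genuine finite tuple and a genuine finite conjunction, so the transferred description is literally the external one, whence $\sNe{\cC}_n=\Ne{\cC}_n$. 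The same applies to the structure maps: for standard $n$ the monotone maps $d^i\colon[n]\to[n+1]$ and $s^i\colon[n+1]\to[n]$ are standard morphisms of $\Delta$, and the transfer of ``delete the object $X_i$'' (respectively ``replace $X_i$ by $X_i\xrightarrow{\id{X_i}}X_i$'') is exactly that operation. Hence $\delta_i$ and $\sigma_i$ computed internally agree with those of the ordinary nerve, and $\res(\sNe{\cC})=\Ne{\cC}$ follows.

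The point to be careful about --- and the reason this lemma holds, in contrast to the failure of the corresponding statement for localizations noted just before \ref{thmmodel} --- is exactly the remark used above: for each \emph{standard} degree $n$ the set of $n$-simplices of a nerve carries only a standard, bounded amount of data, namely a length-$n$ composable string, so there is no room for the internal $n$-simplices to be genuinely nonstandard. Granting this, everything is routine transfer and I do not anticipate any real obstacle; the one thing to watch is not to conflate the degree $n$ (ranging over $\Nn$ on the restricted side, over $\Nns$ internally) with the nonstandard objects $[N]$, $N$ infinite, that $\res$ throws away.

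For the statement about functors, let $F\colon\cC\to\cD$ be an internal functor. By transfer the morphism $\sNe{F}$ of internal simplicial sets is given in each degree $n\in\Nns$ by applying $F$ componentwise to a length-$n$ string of composable morphisms. Restricting to standard $n$ and using the identifications $\sNe{\cC}_n=\Ne{\cC}_n$ and $\sNe{\cD}_n=\Ne{\cD}_n$ of the previous step, this map is precisely $(\Ne{F})_n$; naturality in $n$ is inherited, so $\res(\sNe{F})=\Ne{F}$ as morphisms in $\SSets$.
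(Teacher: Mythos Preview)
Your proof is correct and is precisely a careful unwinding of what the paper dismisses as ``obvious from the definition of the nerve'': for each standard $n$ the description of $\Ne{\cC}_n$ involves only a fixed finite tuple and a fixed finite conjunction, so transfer changes nothing. The extra care you take in distinguishing standard degrees from infinite ones, and in noting why this contrasts with the situation for localizations, is more than the paper provides but entirely in the same spirit.
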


\begin{proof}
  This is obvious from the definition of the nerve.
\end{proof}

\noindent
Let $\langle\cM,\cE\rangle$ be a small exact category,
and let $n\in\Nn$. Then the \emph{$n$-th $\mathrm{K}$-group}
of $\cM$
is defined as
\[
  \K{n}{\cM}:=\pi_{n+1}\bigl(\Ne{\Qu{\cM}},0\bigr)
\]
(this is abelian even for $n=0$),
and by transfer, for $n\in\Nns$,
the \emph{$n$-th $\str{\mathrm{K}}$-group}
of an internal exact category $\langle\cM,\cE\rangle$ is
\[
  \sK{n}{\cM}:=\str{\pi}_{n+1}\bigl(\sNe{\sQu{\cM}},0\bigr).
\]

Let $\cA$ be a (small) abelian category,
and let $\cB\subseteq\cA$ be a \emph{Serre subcategory},
i.e. a full abelian subcategory such that for every short exact sequence
\[
  0\longrightarrow A'\longrightarrow A\longrightarrow A''\longrightarrow 0
\]
in $\cA$, $A$ is in $\cB$ if and only if $A'$ and $A''$ are both in $\cB$.
Recall that in this situation, $\cA/\cB$ is the abelian category whose objects are the objects of $\cA$
and whose morphisms from $A$ to $A'$ are represented by diagrams in $\cA$ of the form
\[
  (\sigma,\varphi):\ A\xleftarrow{\sigma}C\xrightarrow{\varphi}A'
\]
with kernel and cokernel of $\sigma$ in $\cB$,
where another diagram
\[
  (\sigma',\varphi'):\ A\xleftarrow{\sigma'}C'\xrightarrow{\varphi'}A'
\]
represents the same morphism if and only if there is a commutative diagram
\[
  \xymatrix{
    & C \ar[dl]_\sigma \ar[dr]^\varphi \\
    A & {C''} \ar[l]_{\sigma''} \ar[r]^{\varphi''} \ar[u]^\tau \ar[d]_{\tau'} & {A'} \\
    & {C'} \ar[ul]^{\sigma'} \ar[ur]_{\varphi'} \\
  }
\]
in $\cA$ where $\tau$, $\tau'$ and $\sigma''$ have kernel and cokernel in $\cB$.
The composition of two morphisms $[A\xleftarrow{\sigma}C\xrightarrow{\varphi}A']$
and $[A'\xleftarrow{\sigma'}C'\xrightarrow{\varphi'}A'']$
is represented by $(\sigma\sigma'',\varphi'\varphi'')$,
where $\sigma''$ is the pullback of $\sigma'$ along $\varphi$ and $\varphi''$
is the corresponding projection:
\[
  \xymatrix{
    A & C \ar@{}[rd]|\Box \ar[l]_\sigma \ar[d]_\varphi & {C''} \ar[l]_{\sigma''} \ar[d]^{\varphi''} \\
    & {A'} & {C'} \ar[l]^{\sigma'} \ar[d]^{\varphi'} \\
    & & {A''}. \\
  }
\]
(One can easily check that this is well defined.)

The canonical exact functor $F:\cA\longrightarrow\cA/\cB$ has the property
that an object $B$ of $\cA$ is in $\cB$ if and only if $FB$ is isomorphic to $0$ in $\cA/\cB$,
and if $F':\cA\longrightarrow\cA'$ is an exact functor
between abelian categories
satisfying $F'B\cong 0$
for all objects $B$ of $\cB$, then $F'$ factors uniquely over $F$.

By transfer,
the *quotient $\cA\ \str{/}\ \cB$ of a *abelian category $\cA$ by a
\emph{*Serre subcategory} $\cB$ is also defined.

\begin{lemma}\label{lemmaSerre}
  Let $\cB$ be a *Serre subcategory of an internal abelian category $\cA$.
  Then $\cB$ is a Serre subcategory of $\cA$, and the abelian categories
  $\cA/\cB$ and $\cA\ \str{/}\ \cB$ are canonically isomorphic.
\end{lemma}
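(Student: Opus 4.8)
The plan is to follow the same pattern used throughout this section (cf.\ \ref{lemmaQuillen}, \ref{lemmanerve}): show that the defining conditions are first order, so that transfer identifies the *quotient with the ordinary quotient. First I would check that ``$\cB$ is a Serre subcategory of $\cA$'': by transfer, $\cB$ being a *Serre subcategory means $\cB$ is a full *abelian subcategory closed under the exact-sequence condition. Since $\cA$ is an internal abelian category, by \ref{beminttwocat} it is an abelian category in the ordinary sense, and a *full *abelian subcategory is full and abelian (the relevant axioms being first order). The two-out-of-three closure condition for $\cB$ is manifestly first order --- it quantifies over objects and morphisms of $\cA$ and refers to exactness of a three-term sequence, which was already noted to be first order. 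Hence the transferred statement is literally the statement that $\cB$ is a Serre subcategory of $\cA$.

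Next I would construct the canonical isomorphism $\cA/\cB \cong \cA\ \str{/}\ \cB$. Both categories have the same objects, namely the objects of $\cA$ (equivalently, by transfer, the objects of $\cA$), so the isomorphism is the identity on objects. On morphisms, a morphism $A \to A'$ in $\cA/\cB$ is an equivalence class of roofs $A \xleftarrow{\sigma} C \xrightarrow{\varphi} A'$ with $\ker\sigma$ and $\mathrm{coker}\,\sigma$ in $\cB$, modulo the stated equivalence relation; a morphism in $\cA\ \str{/}\ \cB$ is, by transfer, exactly the same datum --- the set of roofs, the condition ``kernel and cokernel of $\sigma$ lie in $\cB$'', the equivalence relation via a common refinement, and the composition law via pullback --- all phrased by first order formulas in the language of the abelian category $\cA$ together with the predicate ``object lies in $\cB$''. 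Therefore the transferred description of $\Mor{\cA\ \str{/}\ \cB}{A}{A'}$ coincides with the untransferred description of $\Mor{\cA/\cB}{A}{A'}$, and likewise for identities and composition. This defines an isomorphism of categories, and it is canonical because it is the identity on objects and the tautological identification on morphisms.

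The main point to be careful about --- and the only place where a little thought is needed --- is that all the quantifiers in the definition of $\cA/\cB$ range over things internal to $\cA$ (objects and morphisms of $\cA$, pullback squares in $\cA$), and never over anything like ``finite sequences of arbitrary length'' whose *version would differ from the standard one, as happens for localizations (the cautionary remark preceding \ref{thmmodel}). Here a morphism of the quotient is a single roof, not a word of unbounded length, so there is no hyperfinite-versus-finite discrepancy; similarly the equivalence relation is witnessed by a single refining object $C''$, and composition by a single pullback. Once this is observed, everything reduces to the first-order-ness already invoked in \ref{lemmaQuillen} and \ref{lemmaSerre}'s predecessors, and the proof is a routine transfer argument.

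\begin{proof}
  By transfer, $\cB$ being a *Serre subcategory of $\cA$ means that $\cB$ is a *full *abelian subcategory of $\cA$ such that, for every exact sequence $0\to A'\to A\to A''\to 0$ in $\cA$, the object $A$ lies in $\cB$ if and only if $A'$ and $A''$ do. By \ref{beminttwocat}, the internal abelian category $\cA$ is an abelian category and $\cB\subseteq\cA$ is a full abelian subcategory; the two-out-of-three condition is first order in $\cA$ (with the predicate ``lies in $\cB$''), so its transfer is just the assertion that $\cB$ is a Serre subcategory of $\cA$ in the ordinary sense.

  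For the isomorphism of quotients, note that $\cA/\cB$ and $\cA\ \str{/}\ \cB$ have the same objects, namely the objects of $\cA$. Given objects $A,A'$, a morphism $A\to A'$ in $\cA/\cB$ is an equivalence class of roofs $A\xleftarrow{\sigma}C\xrightarrow{\varphi}A'$ with $\ker\sigma$ and $\mathrm{coker}\,\sigma$ in $\cB$, where the equivalence relation is witnessed by a single common refinement $C''$ and composition is defined by a single pullback square in $\cA$. Every ingredient here --- the set of admissible roofs, the membership condition on $\ker\sigma,\mathrm{coker}\,\sigma$, the equivalence relation, the identities, and the composition law --- is expressed by first order formulas in $\cA$ together with the predicate ``lies in $\cB$'', and none of them involves quantification over data of unbounded length. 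Hence transfer shows that $\Mor{\cA\ \str{/}\ \cB}{A}{A'}$, its identities, and its composition are described by exactly the same formulas as those of $\cA/\cB$. Taking the identity on objects and this tautological identification on morphisms yields a canonical isomorphism $\cA/\cB\xrightarrow{\ \sim\ }\cA\ \str{/}\ \cB$.
\end{proof}
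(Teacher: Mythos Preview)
Your proof is correct and follows exactly the paper's approach: the paper's proof is the one-line observation that both the definition of ``Serre subcategory'' and the construction of the quotient $\cA/\cB$ are first order, hence preserved by transfer. Your write-up simply unpacks this in more detail, including the helpful remark that (unlike general localizations) no quantification over words of unbounded length is involved.
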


\begin{proof}
  This is again obvious, because both the definitions of "Serre subcategory"
  and of "quotient by a Serre subcategory" are obviously first order.
\end{proof}

The \emph{localization theorem} in (higher) algebraic K-theory states
that if $\cB$ is a Serre subcategory of an abelian category $\cA$,
then
\[
  \Ne{\Qu{\cB}}_\bullet\longrightarrow
  \Ne{\Qu{\cA}}_\bullet\longrightarrow
  \Ne{\Qu{(\cA/\cB)}}_\bullet
\]
is a fibre sequence and hence induces a long exact sequence of K-groups
\[
  \ldots\longrightarrow
  \K{n+1}{\cA/\cB}\xrightarrow{\delta}
  \K{n}{\cB}\longrightarrow
  \K{n}{\cA}\longrightarrow
  \K{n}{\cA/\cB}\longrightarrow\ldots.
\]
Furthermore, if $\cB'$ is a Serre subcategory of $\cA'$
and if $F:\cA\longrightarrow\cA'$ is an exact functor
that maps $\cB$ to $\cB'$,
then $F$ induces a morphism of fibre sequences
\[
  \xymatrix{
    {\Ne{\Qu{\cB}}_\bullet} \ar[r] \ar[d]_{\Ne{\Qu{F}}} &
    {\Ne{\Qu{\cA}}_\bullet} \ar[r] \ar[d]_{\Ne{\Qu{F}}} &
    {\Ne{\Qu{(\cA/\cB)}}_\bullet} \ar[d]^{\Ne{\Qu{F}}} \\
    {\Ne{\Qu{\cB'}}_\bullet} \ar[r] &
    {\Ne{\Qu{\cA'}}_\bullet} \ar[r] &
    {\Ne{\Qu{(\cA'/\cB')}}_\bullet.} \\
  }
\]

\begin{thm}\label{thmK}
  For an internal exact category $\cM$ and $i\in\Nn$,
  there are canonical group homomorphisms
  $\sa{}:\K{i}{\cM}\longrightarrow\sK{i}{\cM}$
  which are functorial in $\cM$ in the following sense:
  \begin{itemize}
    \item
      If $F:\cM\longrightarrow\cM'$ is an internal exact functor
      between internal exact categories,
      then the diagram
      \[
        \xymatrix@C=20mm{
          {\sK{n}{\cM}} \ar[r]^{\sK{n}{F}} & {\sK{n}{\cM'}} \\
          {\K{n}{\cM}} \ar[u]^{\sa{}} \ar[r]_{\K{n}{F}} & {\K{n}{\cM'}} \ar[u]_{\sa{}} \\
        }
      \]
      commutes for every $n\in\Nn$.
    \item
      If $\cB$ is a *Serre subcategory of an internal abelian category $\cA$,
      then the diagram
      \[
        \xymatrix@C=20mm{
          {\sK{n+1}{\cA/\cB}} \ar[r]^\delta & {\sK{n}{\cB}} \\
          {\K{n+1}{\cA/\cB}} \ar[u]^{\sa{}} \ar[r]_\delta & {\K{n}{\cB}} \ar[u]_{\sa{}} \\
        }
      \]
      commutes for all $i\in\Nn$.
  \end{itemize}
\end{thm}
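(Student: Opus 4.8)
The plan is to combine the three "compatibility" lemmas from this section (Lemmas~\ref{lemmaQuillen}, \ref{lemmanerve}, \ref{lemmaSerre}) with the natural transformation $\varphi_n$ constructed in Theorem~\ref{thmpi}. First I would observe that for an internal exact category $\cM$, Lemma~\ref{lemmaQuillen} gives a canonical isomorphism $\Qu{\cM}\cong\sQu{\cM}$ of ordinary categories, and hence Lemma~\ref{lemmanerve} gives $\res(\sNe{\sQu{\cM}})=\Ne{\Qu{\sQu{\cM}}}\cong\Ne{\Qu{\cM}}$ as simplicial sets (where I use that the nerve of an ordinary category is literally its own nerve). Taking base point $0$ throughout, this realises $\langle\res(\sNe{\sQu{\cM}}),0\rangle$ as an object of $\SSetsb$ canonically isomorphic to $\langle\Ne{\Qu{\cM}},0\rangle$. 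Then I would define
\[
  \sa{}:\ \K{i}{\cM}=\pi_{i+1}(\Ne{\Qu{\cM}},0)\xrightarrow{\ \cong\ }\pi_{i+1}(\res(\sNe{\sQu{\cM}}),0)\xrightarrow{\ \varphi_{i+1}\ }\str{\pi}_{i+1}(\sNe{\sQu{\cM}},0)=\sK{i}{\cM},
\]
the first arrow being induced by the canonical isomorphism just described and the second being the component of $\varphi_{i+1}$ from Theorem~\ref{thmpi} at the internal simplicial set $\sNe{\sQu{\cM}}$. For $i\geq1$ this is a group homomorphism because $\varphi_{i+1}$ is, and for $i=0$ one notes that $\pi_1$ of the nerve of a Quillen category already carries an abelian group structure (as stated when $\K{0}{}$ was defined) that $\varphi_1$ respects, since $\varphi_n$ is a natural transformation of group-valued functors for $n\geq1$.

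For the first functoriality square I would argue as follows. An internal exact functor $F:\cM\to\cM'$ induces $\sQu{F}:\sQu{\cM}\to\sQu{\cM'}$ by the $*$Quillen $2$-functor, and under the isomorphisms of Lemma~\ref{lemmaQuillen} this is identified with $\Qu{F}:\Qu{\cM}\to\Qu{\cM'}$; applying $\res\circ\sNe{}$ and Lemma~\ref{lemmanerve} identifies $\res(\sNe{\sQu{F}})$ with $\Ne{\Qu{F}}$. Now the square decomposes into two smaller squares: the upper one is the naturality square of $\varphi_{n+1}$ applied to the morphism $\sNe{\sQu{F}}$ of internal simplicial sets (which commutes because $\varphi_{n+1}$ is a natural transformation of functors on $\sSSetsb$, by Theorem~\ref{thmpi}), and the lower one commutes because $\pi_{n+1}$ is a functor on $\SSetsb$ and the identifying isomorphisms are natural in $\cM$. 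Pasting the two squares gives the claim.

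For the second square, the key input is the localization theorem together with Lemma~\ref{lemmaSerre}. Given a $*$Serre subcategory $\cB$ of an internal abelian category $\cA$, the transferred localization theorem says that
\[
  \sNe{\sQu{\cB}}_\bullet\longrightarrow\sNe{\sQu{\cA}}_\bullet\longrightarrow\sNe{\sQu{(\cA\,\str{/}\,\cB)}}_\bullet
\]
is an \emph{internal} fibre sequence; by Lemma~\ref{lemmaSerre} the third term may be replaced by $\sNe{\sQu{(\cA/\cB)}}_\bullet$. Theorem~\ref{thmpi} then tells us that $\res$ carries this to a fibre sequence in $\SSetsb$ and that the square relating the two boundary maps $\str{\delta}$ and $\delta$ to $\varphi_{n+1}$ and $\varphi_n$ commutes. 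Precomposing the left edge and postcomposing along the identifications of $\res(\sNe{\sQu{-}})$ with $\Ne{\Qu{-}}$ (and of $\cA\,\str{/}\,\cB$ with $\cA/\cB$) turns this into exactly the asserted square for $\sa{}$, once one checks — as in the first part — that those identifications are compatible with the ordinary localization boundary map, which holds because the long exact sequence of a fibre sequence is functorial in the fibre sequence. The main obstacle I anticipate is purely bookkeeping: verifying that the chain of canonical isomorphisms $\Qu{\cM}\cong\sQu{\cM}$, $\res\circ\sNe{}$, and $\cA/\cB\cong\cA\,\str{/}\,\cB$ are mutually compatible with \emph{all} the structure maps (induced functors, pullback-composition in the Quillen category, and the connecting homomorphism $\delta$) rather than just with objects and morphisms in isolation — but since each of these identifications comes from a first-order description via transfer, the compatibility is in each case automatic from transfer, and no genuinely new idea is required.
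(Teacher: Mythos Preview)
Your proposal is correct and follows essentially the same approach as the paper's proof: identify $\Ne{\Qu{\cM}}$ with $\res(\sNe{\sQu{\cM}})$ via Lemmas~\ref{lemmaQuillen} and~\ref{lemmanerve}, define $\sa{}$ as the component $\varphi_{n+1}$ of Theorem~\ref{thmpi}, and deduce both functoriality claims from the naturality and fibre-sequence compatibility of $\varphi_n$ established there (together with Lemma~\ref{lemmaSerre} and the transferred localization theorem for the second square). One minor slip: in your chain of identifications you write $\Ne{\Qu{\sQu{\cM}}}$ where you mean $\Ne{\sQu{\cM}}$.
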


\begin{proof}
  By \ref{lemmaQuillen} and \ref{lemmanerve},
  the simplicial set $\Ne{\Qu{\cM}}$
  is functorially isomorphic to $\res\ \sNe{\sQu{\cM}}$,
  so we can apply \ref{thmpi} and define $\sa{}$ by
  \[
    \sa{}:\
    \K{n}{\cM}=
    \pi_{n+1}(\Ne{\Qu{\cM}},0)=
    \pi_{n+1}(\res\ \sNe{\sQu{\cM}},0)\xrightarrow{\varphi_{n+1}}
    \str{\pi}_{n+1}(\sNe{\sQu{\cM}},0)=
    \sK{n}{\cM}
  \]
  for $n\in\Nn$.
  The functoriality of $\sa{}$ follows from the functoriality of $\varphi_n$
  proven in \ref{thmpi}.
\end{proof}

\begin{cor}\label{corSch}
  Let $k$ be an internal field.
  Then there is a canonical morphism of functors
  \[
    \s{}:\
    \K{n}{}\longrightarrow
    \sK{n}{}\circ\s{}:\
    \op{(\Schfp{k})}\longrightarrow\mathcal{Ab}
  \]
  for all $n\in\Nn$,
  i.e. for every $k$-scheme of finite type $X$ and every $n\in\Nn$,
  there is a canonical group homomorphism
  \[
    \s{}:\
    \K{n}{X}\longrightarrow
    \sK{n}{\s{X}}
  \]
  which is functorial in $X$.
\end{cor}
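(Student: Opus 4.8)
The plan is to reduce the statement to Theorem~\ref{thmK}. For a $k$-scheme of finite type $X$ one puts $\K n X:=\K n{\VB X}$, the Quillen $K$-theory of the exact category $\VB X$ of vector bundles on $X$; since $X$ is noetherian, $\VB X$ is equivalent to a $\cU_1$-small exact category, and pullback of bundles makes $\VB{-}$ into a functor $\op{(\Schfp k)}\to\ECAT^{\cU_1}$. The construction of a $\cU_1$-small model of $\VB X$, of the internal scheme $\s X$, and of its internal exact category $\sVB{\s X}$ of $^*$vector bundles, together with their functoriality in $X$, is carried out in \cite{enlsch}; the fact I would quote from there is that $\sVB{\s X}$ is canonically the enlargement $\str{[\VB X]}$, naturally in $X$, and that by definition $\sK n{\s X}=\sK n{\sVB{\s X}}$.

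Granting this, put $\cM:=\sVB{\s X}=\str{[\VB X]}$. For any $\cU_1$-small exact category $\cE$ the $*$-embedding supplies a canonical exact functor $\iota_\cE\colon\cE\to\str{[\cE]}$, $E\mapsto\str E$ (it commutes with source, target, identity, composition and $\oplus$, carries a conflation $\langle i,j\rangle$ to $\str{\langle i,j\rangle}$, and preserves the zero object), and $\iota$ is natural in $\cE$. Postcomposing $\K n{\iota_{\VB X}}$ with the comparison homomorphism $\sa{}$ of Theorem~\ref{thmK} for the internal exact category $\cM$, I would define
\[
  \s{}\colon\ \K n X\;=\;\K n{\VB X}\;\xrightarrow{\ \K n{\iota_{\VB X}}\ }\;\K n{\cM}\;\xrightarrow{\ \sa{}\ }\;\sK n{\cM}\;=\;\sK n{\s X}.
\]
Both arrows are group homomorphisms, and since $\sK n{\cM}$ is an internal and hence an ordinary abelian group, $\s{}$ takes values in $\mathcal{Ab}$. (Unwinding the proof of Theorem~\ref{thmK} through Lemmas~\ref{lemmaQuillen} and \ref{lemmanerve} and Theorem~\ref{thmpi}, this $\s{}$ is the homomorphism induced on $\pi_{n+1}$ by $\Ne{\Qu{\VB X}}\to\Ne{\Qu\cM}\cong\res\,\sNe{\sQu\cM}$ followed by $\varphi_{n+1}$.)

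For the functoriality, let $f\colon X\to Y$ be a morphism of $k$-schemes of finite type, with enlargement $\s f\colon\s X\to\s Y$ and induced internal exact functor $(\s f)^*\colon\sVB{\s Y}\to\sVB{\s X}$. The square in the statement factors into two. The first, comparing $\K n{\iota_{\VB Y}}$ and $\K n{\iota_{\VB X}}$ along $\K n{f^*}$ and $\K n{(\s f)^*}$, commutes because $\iota$ is natural and $*$ is compatible with pullback of bundles, i.e.\ $\iota_{\VB X}\circ f^*\cong(\s f)^*\circ\iota_{\VB Y}$ as exact functors --- one of the comparisons provided by \cite{enlsch}. The second, comparing $\sa{}$ for $\sVB{\s Y}$ with $\sa{}$ for $\sVB{\s X}$ along $\K n{(\s f)^*}$ and $\sK n{(\s f)^*}$, commutes by the first functoriality clause of Theorem~\ref{thmK}. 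Pasting the two yields the desired square, so $\s{}$ is a morphism of functors. The hard part will be the identification $\str{[\VB X]}\cong\sVB{\s X}$: because $\s X$ is the \emph{restricted} enlargement of the scheme $X$ (the naive superstructure element $\str X$ not being a scheme at all), one has to check that internal quasi-coherent, respectively locally free, $\cO_{\s X}$-modules agree with the transfer of the corresponding $\cU_1$-small categories attached to $X$, compatibly with pullback. This is exactly what is established in \cite{enlsch}, so once it is quoted correctly the remainder of the argument is formal.
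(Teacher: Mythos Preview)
Your overall strategy is the same as the paper's: produce an exact functor from $\VB{X}$ into the internal exact category $\sVB{\s X}$, apply $\K n{-}$, and then compose with the comparison map $\sa{}$ of Theorem~\ref{thmK}. The functoriality argument via the two pasted squares is also exactly how the paper proceeds.

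The gap is in how you obtain the first functor. You want to pass through the enlargement $\str{[\VB X]}$ and the $*$-embedding $\iota_{\cE}\colon\cE\to\str{\cE}$, and then identify $\str{[\VB X]}$ with $\sVB{\s X}$. But the map $*\colon\hat S\to\widehat{\str S}$ is only defined on the \emph{standard} superstructure $\hat S$, and the hypothesis is merely that $k$ is an internal field, not that $k=\str{k_0}$ for some $k_0\in\hat S$. For such $k$ (e.g.\ $k=\F_P$ with $P$ an infinite prime), every $k$-module --- and hence every object of $\VB X$ --- lives in $\widehat{\str S}\setminus\hat S$. There is no $\cU_1$-small model of $\VB X$, the expression $\str{[\VB X]}$ is undefined, and the embedding $\iota_{\VB X}$ you invoke does not exist. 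The identification $\str{[\VB X]}\cong\sVB{\s X}$ you attribute to \cite{enlsch} is therefore not available in this generality; that paper does not (and cannot) prove it.

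What \cite{enlsch} does provide, and what the paper actually uses, is a direct exact functor $\s{}\colon\Coh X\to\sCoh{\s X}$ which restricts to $\s{}\colon\VB X\to\sVB{\s X}$ (this is \cite[5.5]{enlsch}), together with the natural isomorphism $\s{}\circ f^*\cong(\s f)^*\circ\s{}$. This $\s{}$ is not the $*$-map on a category; it is a construction specific to schemes over internal bases, sending an external sheaf on $X$ to an internal sheaf on the internal scheme $\s X$. Once you replace your $\iota_{\VB X}$ by this functor, the rest of your argument --- apply $\K n{-}$, then $\sa{}$, and check the two squares --- is correct and coincides with the paper's proof.
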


\begin{proof}
  For a $k$-scheme of finite type $X$,
  denote the category of vector bundles of finite rank on $X$ by $\VB{X}$.
  Then $\langle\VB{X},\cE\rangle$ is an exact category with
  \[
    \cE:=\left\{(i,j)\
    \left\vert\
    \mbox{$0\longrightarrow A'\xrightarrow{i}A\xrightarrow{j}B''\longrightarrow 0$
    exact in $\Coh{X}$}
    \right.\right\},
  \]
  and by definition $\K{n}{X}:=\K{n}{\VB{X}}$.
  Let $\sVB{X}$ be the internal exact category of *vector bundles of *finite rank on $\s{X}$.
  We know from \cite[5.5]{enlsch} that $\s{}:\Coh{X}\longrightarrow\sCoh{\s{X}}$ restricts to an exact functor
  $\s{}:\VB{X}\longrightarrow\sVB{\s{X}}$,
  and if $f:X'\longrightarrow X$ is a morphism of $k$-schemes of finite type,
  then the functors
  $\s{}\circ f^*$ and $[\s{f}]^*\circ\s{}$ from $\VB{X}$ to $\sVB{\s{X'}}$ are obviously canonically isomorphic.

  This implies that we get functorial group homomorphisms
  $\K{n}{X}\longrightarrow\K{n}{\sVB{\s{X}}}$ for all $n\in\Nn$,
  so we can define $\s{}$ as the composition
  \[
    \s{}:\
    \K{n}{X}\longrightarrow
    \K{n}{\sVB{\s{X}}}\xrightarrow{\sa{}}
    \sK{n}{\sVB{\s{X}}}=
    \sK{n}{\s{X}}.
  \]
  The functoriality of $\sa{}$ proven in \ref{thmK} shows that $\s{}$ thus defined is functorial.
\end{proof}


\bibliographystyle{alpha}
\bibliography{../Literatur}

\def\cprime{$'$}
\begin{thebibliography}{Kan58}

\bibitem[BS04]{nsetale}
Lars Br{\"u}njes and Christian Serp{\'e}.
\newblock Nonstandard {\'{e}}tale {C}ohomology.
\newblock Preprint, arXiv:math.AG/0412406, 2004.

\bibitem[BS05]{enlcat}
Lars Br{\"u}njes and Christian Serp{\'e}.
\newblock Enlargements of {C}ategories.
\newblock {\em Theory and {A}pplications of {C}ategories}, 14:357--398, 2005.
\newblock http://www.tac.mta.ca/tac/.

\bibitem[BS07]{enlsch}
Lars Br{\"u}njes and Christian Serp{\'e}.
\newblock Enlargements of schemes.
\newblock {\em Log. Anal.}, 1(1):1--60, 2007.

\bibitem[BS08]{EoSII}
Lars Br{\"u}njes and Christian Serp{\'e}.
\newblock \'{E}tale and motivic cohomology and ultra products of schemes.
\newblock Preprint in preparation, 2008.

\bibitem[GV72]{SGA4I}
A.~Grothendieck and J.~L. Verdier.
\newblock {\em Th\'eorie des topos et cohomologie \'etale des sch\'emas ({SGA}4
  I)}, volume 269 of {\em Lecture Notes in Mathematics}.
\newblock Springer, Berlin, Heidelberg, New York, 1972.

\bibitem[Kan58]{kan2}
Daniel~M. Kan.
\newblock A {C}ombinatorial {D}efinition of {H}omotopy {G}roups.
\newblock {\em Annals of Mathematics}, 67(2):282--312, 1958.

\bibitem[Lei02]{leinster}
Tom Leinster.
\newblock A {S}urvey of {D}efinitions of $n$-{C}ategory.
\newblock {\em Theory and Applications of Categories}, 10(1):1--70, 2002.

\bibitem[Qui69]{quillen}
D.~G. Quillen.
\newblock Rational homotopy theory.
\newblock {\em Annals of {M}athematics}, 90:205--295, 1969.

\bibitem[Qui73]{quillenK}
D.~G. Quillen.
\newblock Higher algebraic $k$-theory: {I}.
\newblock In H.~Bass, editor, {\em Higher $K$-{T}heories}, pages 85--147, New
  York, Berlin, Heidelberg, 1973. Springer.
\newblock Lecture {N}otes in {M}athematics 341.

\end{thebibliography}

\end{document}